\newcommand{\norm}[1]{\left\lVert #1 \right\rVert}
\newcommand{\inner}[2]{\left\langle #1,#2 \right\rangle}
\newcommand{\abs}[1]{\left|#1\right|}
\newcommand{\normal}[1]{\frac{\partial #1}{\partial \nu}}
\newcommand{\conormal}[2]{\frac{\partial #1}{\partial \nu_{#2}}}
\renewcommand{\div}[0]{\textnormal{div}}
\newcommand{\curl}[0]{\textnormal{curl}}
\newcommand{\veccurl}[0]{\textbf{curl}}
\renewcommand{\Im}[0]{\textnormal{Im}}
\renewcommand{\Re}[0]{\textnormal{Re}}
\newcommand*\myat{{\fontfamily{ptm}\selectfont @}}
\renewenvironment{proof}{\noindent{\bfseries Proof.}}{\qed} %Change appearance of "Proof." to bold rather than italics
\newtheorem{theorem}{Theorem}
\newtheorem{lemma}[theorem]{Lemma}
\newtheorem{corollary}[theorem]{Corollary}
\newtheorem{remark}[theorem]{Remark}
\newtheorem{assumption}[theorem]{Assumption}
\newtheorem{proposition}[theorem]{Proposition}
\begin{document}

\title{Asymptotic expansions of Stekloff eigenvalues for perturbations of inhomogeneous media%\thanks{Acknowledgements go here.}
}
%\subtitle{Do you have a subtitle?\\ If so, write it here}

%\titlerunning{Asymptotic expansions of Stekloff eigenvalues}        % if too long for running head

\author{Samuel Cogar\thanks{Department of Mathematics, Rutgers University, Piscataway, NJ 08854 (samuel.cogar\myat rutgers.edu}
}

%\authorrunning{Short form of author list} % if too long for running head

%\institute{Samuel Cogar \at
%              Department of Mathematics, Rutgers University, Piscataway, NJ 08854, USA \\
%              \email{samuel.cogar@rutgers.edu} \\
%			  ORCID: 0000-0001-6279-801X           %  \\
%}

\date{}
\providecommand{\keywords}[1]{\small{\textbf{Key words.} #1}}
\providecommand{\AMS}[1]{\small{\textbf{AMS subject classifications.} #1}}
% The correct dates will be entered by the editor

\maketitle

\begin{abstract}

Eigenvalues arising in scattering theory have been envisioned as a potential source of target signatures in nondestructive testing of materials, whereby perturbations of the eigenvalues computed for a penetrable medium would be used to infer changes in its constitutive parameters relative to some reference values. We consider a recently introduced modification of the class of Stekloff eigenvalues, in which the inclusion of a smoothing operator guarantees that infinitely many eigenvalues exist under minimal assumptions on the medium, and we derive precise formulas that quantify the perturbation of a simple eigenvalue in terms of the coefficients of a perturbed inhomogeneous medium. These formulas rely on the theory of nonlinear eigenvalue approximation and regularity results for elliptic boundary-value problems with heterogeneous coefficients, the latter of which is shown to have a strong influence on the sensitivity of the eigenvalues corresponding to an anisotropic medium. A simple numerical example in two dimensions is used to verify the estimates and suggest future directions of study.

\end{abstract}

\begin{keywords}
inverse scattering, nondestructive testing, non-selfadjoint eigenvalue problems, Laplace-Beltrami operator, nonlinear eigenvalue problems
\end{keywords}

\begin{AMS}
35J25, 35P05, 35P25, 35R30
\end{AMS}

\section{Introduction}
\label{sec:intro}

A problem of interest in inverse scattering theory is to detect changes in a penetrable medium from a knowledge of how waves are scattered from it, serving as an example of nondestructive testing of materials. A recent approach is to use certain classes of eigenvalues arising in scattering theory as target signatures, in which perturbations of the eigenvalues may potentially be used to infer changes in the medium relative to some reference configuration. This approach was initially envisioned for the class of transmission eigenvalues (cf. \cite{cakoni_colton_haddar}); however, while this avenue of research has been fruitful from a mathematical perspective, difficulties have arisen concerning their practical implementation. In particular, measured scattering data may only be used to detect real transmission eigenvalues, none of which exist for an absorbing medium. Moreover, the detection of transmission eigenvalues requires the collection of multifrequency data in a predetermined range, thereby providing little freedom in the choice of interrogating frequency. \par

By following the same reasoning that led to the development of transmission eigenvalues for a penetrable medium, a new class of eigenvalues was introduced in \cite{cakoni_colton_meng_monk} by considering a modification of the measured acoustic scattering data obtained by subtracting from it the scattering data for an exterior impedance problem dependent upon a parameter $\lambda$. This specific case resulted in the class of Stekloff eigenvalues, and the appearance of the eigenparameter $\lambda$ in the so-called auxiliary problem rather than the physical problem removed many of the aforementioned restrictions that complicate the practical application of transmission eigenvalues: eigenvalues anywhere in the complex plane can be computed from measured scattering data, and a single interrogation frequency may be chosen at will. Concerning the latter point, the collection of scattering data for multiple choices of the frequency is essentially replaced by the computation of auxiliary scattering data for multiple choices of the eigenparameter $\lambda$. \par

From this approach was born various new eigenvalue problems (cf. \cite{audibert_cakoni_haddar,camano_lackner_monk,cogar,cogar_thesis,cogar2020,cogar2020EM,cogar_colton_meng_monk,cogar_colton_monk,cogar_monk,halla2,halla1,li}),
which share many qualities but often require different techniques in the course of their analysis. Two properties of these classes of eigenvalues are of special interest in a practical setting but difficult to study: the existence of eigenvalues and their sensitivity to changes in the medium to which they correspond. The former property is necessary for eigenvalues to find use as a target signature. It is generally a simple matter to establish for a nonabsorbing penetrable medium, as the resulting eigenvalue problem is often self-adjoint; however, in the absorbing case it has been shown only under stringent smoothness requirements on the medium for some classes of eigenvalues (cf. \cite{cakoni_colton_meng_monk,cogar_colton_meng_monk}) using Agmon's theory of non-selfadjoint boundary value problems (cf. \cite{agmon}), but existence results are not available otherwise. This issue was addressed in \cite{cogar2020} by introducing a smoothing operator in the construction of Stekloff eigenvalues, leading to the class of $\delta$-Stekloff eigenvalues, for which it was shown that infinitely many eigenvalues exist--even for an absorbing medium--provided that the degree of smoothing is sufficiently high. It is for this modified class of Stekloff eigenvalues (and consequently the standard class by choosing the smoothing parameter $\delta=0$) that we address the second issue of sensitivity. Knowledge of the relationship between the eigenvalues and changes in the medium is of vital importance in the intended application of nondestructive testing, as the perturbations of the eigenvalues are the only information that is obtained when using eigenvalues as a target signature. Thus, precise results in this direction may improve the applicability of using eigenvalues as target signatures in a realistic setting. \par

While the sensitivity of eigenvalues was also discussed in \cite{cogar2020}, the main effort was to provide stability estimates for a certain solution operator that was related to the class of $\delta$-Stekloff eigenvalues, and the perturbations of the eigenvalues themselves were not quantified. The present aim is to improve upon this idea in three ways by reformulating the $\delta$-Stekloff eigenvalue problem as a nonlinear eigenvalue problem and using the asymptotic correction formula given in \cite{moskow} (see \cite{cakoni_moskow,cakoni_moskow_rome} for a similar approach to transmission eigenvalues). First, we obtain an asymptotic formula for the perturbation of a simple $\delta$-Stekloff eigenvalue due to a change in the consitutitive parameters of the medium. Second, we use this formula to arrive at upper bounds relative to the norm of the perturbed coefficients. Third, we extend our study of eigenvalue perturbations to the case of anisotropic media, which necessitates precise results on the regularity of the medium and significantly increases the applicability of our findings. \par

The remainder of our discussion is outlined as follows. As we will frequently use a few key results throughout our investigation, Section \ref{sec:prelim} is devoted to their statements and the basic notation on which they rely. In Section \ref{sec:eig} we introduce the $\delta$-Stekloff eigenvalue problem that we will consider and provide some basic results from \cite{cogar2020}, including the aforementioned result that infinitely many eigenvalues exist when the smoothing parameter $\delta$ is sufficiently large relative to the dimension. We reformulate the $\delta$-Stekloff eigenvalue problem as a nonlinear eigenvalue problem in Section \ref{sec:perturbation} and use the first-order correction formula from \cite{moskow} to arrive at our main result, which quantifies the sensitivity of the eigenvalues to changes in the medium. In Section \ref{sec:isotropic} we briefly consider the simpler case in which the material is isotropic, allowing us to leverage higher regularity results in order to improve the upper bound on the perturbations of the eigenvalues (in the sense of achieving a lower $L^p$-norm of the perturbed coefficients). After comparing the results of a simple numerical example in Section \ref{sec:numerics} to the theoretical results of the preceding section, we conclude in Section \ref{sec:conclusion} with an overview of the impact and potential use of the main results along with some unanswered questions.

\section{Preliminaries and notation}
\label{sec:prelim}

We devote this section to introducing the notation and relevant results that will be used throughout our investigation. We consider a domain $B\subseteq\mathbb{R}^d$, $d=2,3$, which we assume to have connected exterior $\mathbb{R}^d\setminus\overline{B}$ and smooth, simply-connected boundary $\partial B$, the latter of which has a unit outward normal vector denoted by $\nu$. On this domain we define the standard $L^p$-spaces for $p\in[1,\infty)$ by
\begin{equation*}
L^p(B) = \left\{ f:B\to\mathbb{C} \;\middle|\; f \text{ is measurable and } \int_B \abs{f}^p \,dx < \infty \right\}
\end{equation*}
and equip each space with the norm
\begin{equation*}
\norm{f}_{L^p(B)} = \left( \int_B \abs{f}^p \,dx \right)^{1/p}.
\end{equation*}
In particular, we recall that $L^2(B)$ is a Hilbert space, and we denote its inner product by $(\cdot,\cdot)_B$, which is defined as
\begin{equation*}
(f_1,f_2)_B = \int_B f_1 \overline{f_2} \,dx \quad\forall f_1,f_2\in L^2(B).
\end{equation*}
We define the space $L^\infty(B)$ by
\begin{equation*}
L^\infty(B) = \left\{ f:B\to\mathbb{C} \;\middle|\; f \text{ is measurable and } \text{ess supp}(\abs{f}) < \infty \right\}
\end{equation*}
and equip it with the norm
\begin{equation*}
\norm{f}_{L^\infty(B)} = \text{ess supp}(\abs{f}).
\end{equation*}
For any positive integer $m>0$, we define the Sobolev space $H^m(B)$ in the usual way (cf. \cite{adams,brezis}) as
\begin{equation*}
H^m(B) = \{f\in L^2(B) \mid D^\alpha f\in L^2(B), \;\abs{\alpha}\le m\},
\end{equation*}
where $D^\alpha$ refers to the distributional derivative corresponding to the multi-index $\alpha$. For $t\in(0,1)$, the fractional Sobolev space $H^t(B)$ is defined by means of interpolation between $H^1(B)$ and $L^2(B)$ (cf. \cite{tartar}), and we define the space $H^{t+1}(B)$ as
\begin{equation*}
H^{t+1}(B) = \{f\in H^1(B) \mid \nabla f\in(H^t(B))^d\}
\end{equation*}
with norm $\norm{f}_{H^{t+1}(B)}^2 = \norm{f}_{H^1(B)}^2 + \norm{\nabla f}_{(H^t(B))^d}^2$ (cf. \cite{bonito_guermond_luddens}). We denote by $\tilde{H}^{-t}(B)$ the corresponding dual space of $H^t(B)$, defined in terms of a Gelfand triple with pivot space $L^2(B)$. \par
If $X$ denotes any of these spaces, we use the notation $(X)^d$ and $(X)^{d\times d}$ to represent spaces of $d$-dimensional vectors and $d\times d$ matrices, respectively, whose entries lie in $X$, and we define the respective norms by
\begin{equation*}
\norm{(u_1,\dots,u_d)^T}_{(X)^d} = \sum_{i=1}^d \norm{u_i}_X, \quad \norm{\left(\begin{array}{ccc} u_{11} & \cdots & u_{1d} \\ \vdots & \ddots & \vdots \\ u_{d1} & \cdots & u_{dd} \end{array}\right)}_{(X)^{d\times d}} = \sum_{i,j=1}^d \norm{u_{ij}}_X.
\end{equation*}
We also define the Sobolev space $H^t(\partial B)$ for $t>0$ as the trace space of $H^{t+\frac{1}{2}}(B)$ with the image norm (cf. \cite{adams}), and we define the dual space $H^{-t}(\partial B)$ with $L^2(\partial B)$ as the pivot space, where the inner product $\inner{\cdot}{\cdot}_{\partial B}$ on $L^2(\partial B)$ defined by
\begin{equation*}
\inner{f_1}{f_2}_{\partial B} = \int_{\partial B} f_1 \overline{f_2} \,ds \quad\forall f_1,f_2\in L^2(\partial B)
\end{equation*}
will also be used to denote the duality pairing of $H^{-t}(\partial B)$ and $H^t(\partial B)$. 

We state a special case of the Sobolev embedding theorem (cf. \cite{adams}), recalling that $B$ is assumed to be a smooth domain in $\mathbb{R}^d$.

\begin{theorem} \label{theorem:sobolev}

\textnormal{\textbf{(Sobolev Embedding Theorem)}} Suppose that $f\in H^t(B)$ for some $t>0$.
\begin{enumerate}[label=(\roman*)]
\item If $2t>d$, then $f\in C_b(B)$, the space of bounded continuous functions on $B$ equipped with the uniform norm, with the estimate
\begin{equation*}
\norm{f}_{C_b(B)} \le C_t \norm{f}_{H^t(B)}.
\end{equation*}
\item If $2t=d$, then $f\in L^p(B)$ for every $p\in[2,\infty)$ with the estimate
\begin{equation*}
\norm{f}_{L^p(B)} \le C_{t,p} \norm{f}_{H^t(B)}.
\end{equation*}
\item If $2t<d$, then $f\in L^{2d/(d-2t)}(B)$ with the estimate
\begin{equation*}
\norm{f}_{L^{2d/(d-2t)}(B)} \le C_t \norm{f}_{H^t(B)}.
\end{equation*}
\end{enumerate}
Each constant is independent of $f$.

\end{theorem}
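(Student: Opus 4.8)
The statement is classical, so the plan is mostly to assemble known ingredients. For integer $t$ it is exactly the Sobolev embedding in the form found in \cite{adams}, and for non-integer $t$ I would reduce to the whole space and exploit the Fourier characterization of $H^t(\mathbb{R}^d)$. Since $\partial B$ is smooth, there is a bounded linear extension operator $E : H^t(B)\to H^t(\mathbb{R}^d)$ (cf. \cite{adams}), and restriction to $B$ is bounded from $H^t(\mathbb{R}^d)$, $C_b(\mathbb{R}^d)$, and $L^p(\mathbb{R}^d)$ into the corresponding spaces on $B$; hence it suffices to prove each of the three estimates for $g=Ef$ on all of $\mathbb{R}^d$, where one has the equivalent norm $\norm{g}_{H^t(\mathbb{R}^d)}^2\simeq\int_{\mathbb{R}^d}(1+\abs{\xi}^2)^t\abs{\hat g(\xi)}^2\,d\xi$. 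I note in passing that the standard definitions of $H^t(B)$ on a smooth domain (interpolation, Sobolev--Slobodeckij, restriction of $H^t(\mathbb{R}^d)$) all coincide with equivalent norms, so the particular choice adopted above is immaterial here.

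For part (i), writing $\hat g(\xi)=(1+\abs{\xi}^2)^{-t/2}\cdot(1+\abs{\xi}^2)^{t/2}\hat g(\xi)$ and applying the Cauchy--Schwarz inequality in the Fourier inversion formula gives $\norm{g}_{C_b(\mathbb{R}^d)}\le C_t\norm{g}_{H^t(\mathbb{R}^d)}$, the key point being that $2t>d$ makes $(1+\abs{\xi}^2)^{-t}$ integrable; continuity then follows from $\hat g\in L^1(\mathbb{R}^d)$. For part (iii), I would write $g$ as the convolution of the Bessel kernel of order $t$ with a function $h\in L^2(\mathbb{R}^d)$ satisfying $\norm{h}_{L^2(\mathbb{R}^d)}\le C\norm{g}_{H^t(\mathbb{R}^d)}$ and apply the Hardy--Littlewood--Sobolev inequality, using the standard pointwise decay of the Bessel kernel, to obtain $g\in L^{2d/(d-2t)}(\mathbb{R}^d)$ with the claimed bound. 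Part (ii) then follows from part (iii) back on the bounded domain $B$: since $H^t(B)\hookrightarrow H^s(B)$ whenever $0<s\le t$ and $L^q(B)\hookrightarrow L^p(B)$ whenever $q\ge p$, given $p\in[2,\infty)$ one picks $s<d/2$ close enough to $t=d/2$ that $2d/(d-2s)\ge p$ and chains $H^t(B)\hookrightarrow H^s(B)\hookrightarrow L^{2d/(d-2s)}(B)\hookrightarrow L^p(B)$.

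The one point that needs care is the borderline regime $2t=d$: no $L^\infty$ bound can hold there, which is exactly why part (ii) is restricted to finite exponents, and the constant $C_{t,p}$ must deteriorate as $p\to\infty$ (it records how close the auxiliary exponent $s$ has to be pushed to $d/2$). The remaining ingredients --- boundedness of the extension operator, the Fourier-side inequalities, and the chaining of embeddings on the bounded set $B$ --- are routine.
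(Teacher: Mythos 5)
The paper does not prove this statement at all: it is quoted as a classical special case of the Sobolev embedding theorem with a citation to \cite{adams}, so there is no internal argument to compare against. Your sketch — bounded extension to $\mathbb{R}^d$, Cauchy--Schwarz on the Fourier side when $2t>d$, the Bessel-potential representation with the Hardy--Littlewood--Sobolev inequality when $2t<d$, and chaining $H^t(B)\hookrightarrow H^s(B)\hookrightarrow L^{2d/(d-2s)}(B)\hookrightarrow L^p(B)$ at the critical exponent (with the correct caveat that $C_{t,p}$ blows up as $p\to\infty$) — is precisely the standard proof the citation points to, and it is correct as written.
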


We will also use this result for vector-valued functions, in which case we apply it to each component individually. The following result from \cite{brezis} is a specific form of H{\"o}lder's inequality involving the product of three functions lying in suitable $L^p$-spaces. We will frequently apply the resulting estimate with $q$ and $r$ determined by the Sobolev embedding theorem and $p$ determined by the condition in the theorem.

\begin{theorem} \label{theorem:holder}

\textnormal{\textbf{(Three-Term H{\"o}lder's Inequality)}} If $f_1\in L^p(B)$, $f_2\in L^q(B)$, and $f_3\in L^r(B)$ for $p,q,r\in[1,\infty]$ satisfying
\begin{equation*}
\frac{1}{p} + \frac{1}{q} + \frac{1}{r} = 1,
\end{equation*}
then $f_1 f_2 f_3\in L^1(B)$ and 
\begin{equation} \label{holder_interp}
\norm{f_1 f_2 f_3}_{L^1(B)} \le \norm{f_1}_{L^p(B)} \norm{f_2}_{L^q(B)} \norm{f_3}_{L^r(B)}.
\end{equation}

\end{theorem}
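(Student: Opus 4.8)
The plan is to reduce the three-term inequality to two applications of the classical two-term H\"older inequality, which we take as known (it is the two-function case treated in \cite{brezis}). First I would dispose of the cases in which one of the exponents equals $\infty$: if, say, $p=\infty$, then $\frac1q+\frac1r=1$, so $f_2 f_3\in L^1(B)$ with $\norm{f_2 f_3}_{L^1(B)}\le\norm{f_2}_{L^q(B)}\norm{f_3}_{L^r(B)}$ by two-term H\"older (valid also when one of $q,r$ equals $1$ and the other $\infty$), and the pointwise bound $\abs{f_1 f_2 f_3}\le\norm{f_1}_{L^\infty(B)}\abs{f_2 f_3}$ then yields \eqref{holder_interp}; the cases $q=\infty$ and $r=\infty$ are symmetric.

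It therefore remains to treat the case $p,q,r\in(1,\infty)$, which is in fact forced whenever all three exponents are finite, since then each reciprocal is strictly positive and the three sum to $1$. Introduce the intermediate exponent $s$ defined by $\frac1s=\frac1q+\frac1r$; then $\frac1p+\frac1s=1$ with $\frac1s\in(0,1)$, so $p$ and $s$ are conjugate exponents in $(1,\infty)$. Applying two-term H\"older with this pair gives
\begin{equation*}
\norm{f_1 f_2 f_3}_{L^1(B)}\le\norm{f_1}_{L^p(B)}\norm{f_2 f_3}_{L^s(B)},
\end{equation*}
so it suffices to bound $\norm{f_2 f_3}_{L^s(B)}$. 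Writing $\norm{f_2 f_3}_{L^s(B)}^s=\norm{\abs{f_2}^s\abs{f_3}^s}_{L^1(B)}$ and noting that $q/s$ and $r/s$ are conjugate (since $\frac sq+\frac sr=s(\frac1q+\frac1r)=1$), a second application of two-term H\"older gives $\norm{\abs{f_2}^s\abs{f_3}^s}_{L^1(B)}\le\norm{\abs{f_2}^s}_{L^{q/s}(B)}\norm{\abs{f_3}^s}_{L^{r/s}(B)}=\norm{f_2}_{L^q(B)}^s\norm{f_3}_{L^r(B)}^s$. Taking $s$-th roots and combining with the previous display yields \eqref{holder_interp}, and in particular $f_1 f_2 f_3\in L^1(B)$.

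There is no genuine obstacle here: the only points requiring attention are checking that the auxiliary exponent $s$ and the ratios $q/s,r/s$ really lie in the admissible range $[1,\infty]$ — which is immediate from $\frac1p+\frac1q+\frac1r=1$ — and keeping track of the edge cases where an exponent equals $1$ or $\infty$. Should one prefer to bypass the case analysis entirely, one may instead invoke the general product form of H\"older's inequality for finitely many factors directly; the iteration above is simply its specialization to three functions.
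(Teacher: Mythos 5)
Your argument is correct. Note, however, that the paper does not prove this statement at all: it is imported as a known result from \cite{brezis} (stated only in the bounded-domain, three-factor form needed later), so there is no internal proof to compare against. Your reduction — splitting off the exponent-$\infty$ cases, then introducing the intermediate exponent $s$ with $\frac1s=\frac1q+\frac1r$ and applying the two-term H\"older inequality twice (once with the conjugate pair $(p,s)$, once with $(q/s,r/s)$ after raising to the $s$-th power) — is the standard derivation and is carried out correctly, including the observation that finiteness of all three exponents forces $p,q,r\in(1,\infty)$. The only cosmetic remark is that the first application of two-term H\"older is stated before $f_2f_3\in L^s(B)$ has been verified; since the second step establishes exactly that bound, this is a matter of ordering rather than a gap.
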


We remark that the three-term H{\"o}lder's inequality may be applied when $f_1 = \mathbf{F}_1$ is a matrix-valued function and $f_2 = \mathbf{f}_2$ and $f_3 = \mathbf{f}_3$ are vector-valued functions (with the integrand replaced by $\mathbf{F}_1\mathbf{f}_2\cdot\mathbf{f}_3$). While the exact implementation of the result in this case requires expanding the matrix multiplication and evaluating the dot product in the integrand, the resulting inequality is identical in form to \eqref{holder_interp}, and consequently we use it without further comment from this point onward.

Finally, we state the following interpolation inequality from \cite{brezis}, which can be seen as a consequence of the standard H{\"o}lder's inequality, to estimate the remainder terms in our asymptotic formulas. Note that this statement is more specific than that found in \cite{brezis}, covering only the case of a bounded domain.

\begin{theorem} \label{theorem:holder_interp}

\textnormal{\textbf{(H{\"o}lder's Interpolation Inequality)}} Assume that $f\in L^q(B)$ for some $q\in[1,\infty]$ and that $p\in[1,q]$. If $r\in[p,q]$ and $\alpha\in[0,1]$ satisfy
\begin{equation*}
\frac{1}{r} = \frac{\alpha}{p} + \frac{1-\alpha}{q},
\end{equation*}
then
\begin{equation} \label{holder_interp}
\norm{f}_{L^r(B)} \le \norm{f}_{L^p(B)}^\alpha \norm{f}_{L^q(B)}^{1-\alpha}.
\end{equation}

\end{theorem}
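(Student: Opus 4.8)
The plan is to reduce the statement to the ordinary (two-term) Hölder inequality by writing $\abs{f}^r$ as a product of two powers of $\abs{f}$ and choosing the Hölder exponents so that the constraint on $1/r$ becomes exactly the conjugacy relation between them. First I would dispose of the trivial endpoints: if $\alpha=0$ then $r=q$ and if $\alpha=1$ then $r=p$, so in both cases the asserted inequality is an identity, and I may assume $\alpha\in(0,1)$. The case $q=\infty$ I would handle directly: the defining relation becomes $1/r=\alpha/p$, hence $\alpha r=p$, and $\int_B\abs{f}^r\,dx=\int_B\abs{f}^{\alpha r}\abs{f}^{(1-\alpha)r}\,dx\le\norm{f}_{L^\infty(B)}^{(1-\alpha)r}\int_B\abs{f}^{p}\,dx$, from which the claim follows on taking $r$-th roots. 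So I may also assume $q<\infty$, and then the hypothesis $f\in L^q(B)$ on the bounded domain $B$ forces $f\in L^p(B)\cap L^r(B)$, so all quantities appearing are finite.

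For the main case, set $s=\frac{p}{\alpha r}$ and $s'=\frac{q}{(1-\alpha)r}$. The key observation is the arithmetic identity $\frac{1}{s}+\frac{1}{s'}=r\bigl(\frac{\alpha}{p}+\frac{1-\alpha}{q}\bigr)=1$, so $s$ and $s'$ are conjugate; moreover, since $(1-\alpha)/q\ge0$ and $\alpha/p\ge0$, the relation for $1/r$ gives $1/r\ge\alpha/p$ and $1/r\ge(1-\alpha)/q$, which is precisely $s\ge1$ and $s'\ge1$, so the two-term Hölder inequality is applicable with these exponents. Applying it to $\abs{f}^{\alpha r}\in L^s(B)$ (note $\alpha rs=p$) and $\abs{f}^{(1-\alpha)r}\in L^{s'}(B)$ (note $(1-\alpha)rs'=q$) yields
\begin{equation*}
\norm{f}_{L^r(B)}^r=\int_B\abs{f}^{\alpha r}\abs{f}^{(1-\alpha)r}\,dx\le\norm{f}_{L^p(B)}^{p/s}\norm{f}_{L^q(B)}^{q/s'},
\end{equation*}
and since $p/s=\alpha r$ and $q/s'=(1-\alpha)r$, taking $r$-th roots gives the claim.

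There is no serious obstacle here; the argument is essentially bookkeeping once the splitting of $\abs{f}^r$ and the conjugate exponents are chosen. The one point that warrants care is verifying that $s$ and $s'$ are genuinely $\ge1$ so that the two-term inequality applies — I would extract this from the defining relation for $1/r$ rather than from the stated hypothesis $r\in[p,q]$ (which in fact also follows from that relation together with $p\le q$) — and isolating the endpoint $q=\infty$, where $s'$ degenerates and the direct estimate above is cleaner.
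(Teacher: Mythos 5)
Your proof is correct and follows exactly the route the paper indicates, namely deducing the interpolation inequality from the standard two-term H\"older inequality (the paper itself gives no proof, citing Brezis, and your argument is the standard one: split $\abs{f}^r=\abs{f}^{\alpha r}\abs{f}^{(1-\alpha)r}$ and apply H\"older with the conjugate exponents $s=p/(\alpha r)$, $s'=q/((1-\alpha)r)$). The endpoint cases $\alpha\in\{0,1\}$ and $q=\infty$ are handled properly, and your verification that $s,s'\ge1$ from the defining relation is sound.
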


For both Theorems \ref{theorem:holder} and \ref{theorem:holder_interp}, we adopt the convention that $\frac{1}{\infty} = 0$ in the case that one of the functions lies in $L^\infty(B)$.

\section{The $\delta$-Stekloff eigenvalue problem}
\label{sec:eig}

We recall that the domain $B$ introduced in the previous section has smooth, simply-connected boundary $\partial B$. Before we present the eigenvalue problem of interest from \cite{cogar2020}, we must introduce a smoothing operator that will be necessary for certain existence results.  Denoting by $\nabla_{\partial B}$, $\div_{\partial B}$, $\curl_{\partial B}$, and $\veccurl_{\partial B}$ the surface gradient, surface divergence, scalar surface curl, and vector surface curl, respectively, we define the nonnegative Laplace-Beltrami operator as
\begin{equation*}
\Delta_{\partial B} := -\div_{\partial B}\nabla_{\partial B} = \curl_{\partial B} \veccurl_{\partial B}.
\end{equation*}
We refer to \cite{monk,nedelec} for details on these surface differential operators, and we note that we have introduced a negative sign as in \cite{jost} in order to ensure nonnegativity of $\Delta_{\partial B}$. We state the following results from \cite{sayas_brown_hassell} on the spectral properties of this operator.

\begin{proposition} \label{prop:LB}

There exists an orthonormal basis $\{Y_m\}_{m=0}^\infty$ of $L^2(\partial B)$ and a nondecreasing divergent sequence of nonnegative real numbers $\{\mu_m\}_{m=0}^\infty$, counted according to multiplicity, such that
\begin{equation*}
\Delta_{\partial B} Y_m = \mu_m Y_m, \; m\ge0.
\end{equation*}
The first eigenvalue is $\mu_0 = 0$ with constant eigenfunction $Y_0 = \abs{\partial B}^{-1/2}$, and $\mu_m>0$ for $m\ge1$.

\end{proposition}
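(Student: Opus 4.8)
The plan is to derive Proposition \ref{prop:LB} from the spectral theorem for compact, self-adjoint, positive operators on a Hilbert space, applied to the solution operator of the shifted problem $(\Delta_{\partial B} + I)u = f$. First I would introduce the Hermitian sesquilinear form
\[
a(u,v) = \int_{\partial B} \nabla_{\partial B} u \cdot \overline{\nabla_{\partial B} v}\, ds + \inner{u}{v}_{\partial B}, \qquad u,v\in H^1(\partial B),
\]
which is bounded on $H^1(\partial B)$ and coercive, since $a(u,u)\ge\norm{u}_{H^1(\partial B)}^2$. By the Lax--Milgram lemma, for every $f\in L^2(\partial B)$ there is a unique $Tf\in H^1(\partial B)$ with $a(Tf,v) = \inner{f}{v}_{\partial B}$ for all $v\in H^1(\partial B)$, and $Tf$ is by definition the weak solution of $(\Delta_{\partial B}+I)(Tf)=f$. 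Regarded as a map $T\colon L^2(\partial B)\to L^2(\partial B)$, it is bounded with range contained in $H^1(\partial B)$.

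Next I would check the hypotheses of the spectral theorem. Self-adjointness of $T$ is immediate from the Hermitian symmetry of $a$. Positivity and injectivity follow from $\inner{Tf}{f}_{\partial B} = a(Tf,Tf)\ge\norm{Tf}_{H^1(\partial B)}^2\ge0$, which vanishes only when $Tf=0$, hence only when $f=0$. Compactness of $T$ follows because it maps $L^2(\partial B)$ boundedly into $H^1(\partial B)$, together with the compact embedding $H^1(\partial B)\hookrightarrow L^2(\partial B)$, which holds since $\partial B$ is a smooth compact manifold; this is the step where one invokes the spectral results of \cite{sayas_brown_hassell} (together with \cite{monk,nedelec} for the surface differential operators themselves). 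The spectral theorem then yields an orthonormal basis $\{Y_m\}_{m=0}^\infty$ of $L^2(\partial B)$ consisting of eigenfunctions of $T$ with positive eigenvalues $\tau_m$, which may be arranged in nonincreasing order and satisfy $\tau_m\to0$.

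Then I would translate the eigenrelation back to $\Delta_{\partial B}$. From $TY_m=\tau_m Y_m$ one reads off $(\Delta_{\partial B}+I)Y_m = \tau_m^{-1}Y_m$ in the weak sense, i.e. $\Delta_{\partial B}Y_m = \mu_m Y_m$ with $\mu_m := \tau_m^{-1}-1$. Testing the identity defining $T$ with $v=Y_m$ gives $a(Y_m,Y_m)=\tau_m^{-1}$, and since $a(Y_m,Y_m)=\norm{\nabla_{\partial B}Y_m}_{(L^2(\partial B))^d}^2+1\ge1$ we conclude $\mu_m\ge0$; the sequence $\{\mu_m\}$ is then nondecreasing and divergent because $\{\tau_m\}$ is nonincreasing and tends to $0$. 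Finally, $\mu_m=0$ forces $\nabla_{\partial B}Y_m\equiv0$, hence $Y_m$ constant because $\partial B$ is connected, and conversely every constant is an eigenfunction with eigenvalue $0$; thus the zero eigenspace is exactly the one-dimensional space of constants, so (after choosing the basis accordingly) $\mu_0=0$ with $Y_0=\abs{\partial B}^{-1/2}$ and $\mu_m>0$ for all $m\ge1$.

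I do not anticipate a genuine obstacle: the argument is a textbook application of the spectral theorem, and the only points requiring care are the precise definition of the surface space $H^1(\partial B)$ and its compact embedding into $L^2(\partial B)$, and the sign and shift bookkeeping in passing between $T$ and $\Delta_{\partial B}$ — both of which are covered by citing \cite{sayas_brown_hassell}. If desired, one could additionally remark that elliptic regularity on the smooth manifold $\partial B$ makes each $Y_m$ smooth, so that the eigenrelation holds in the classical sense as well.
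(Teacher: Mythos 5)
Your proof is correct. The paper itself offers no argument for this proposition — it is quoted directly from the reference \cite{sayas_brown_hassell} — and what you have written is precisely the standard variational proof that such a reference supplies: Lax--Milgram for the shifted form, compactness of the solution operator via the compact embedding $H^1(\partial B)\hookrightarrow L^2(\partial B)$ on the compact smooth manifold $\partial B$, the spectral theorem for compact self-adjoint positive operators, and the identification of the kernel of $\Delta_{\partial B}$ with the constants using connectedness of $\partial B$ (which follows from the paper's assumption that $\partial B$ is simply connected). All the bookkeeping in passing from $T$ to $\Delta_{\partial B}$ ($\mu_m = \tau_m^{-1}-1 \ge 0$, monotonicity and divergence of $\{\mu_m\}$) checks out, so your proposal is a faithful, self-contained replacement for the citation.
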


If $B$ is chosen to be a ball in $\mathbb{R}^d$, then the eigenfunctions $\{Y_m\}$ are given by complex exponentials when $d=2$ and spherical harmonics when $d=3$ (cf. \cite{nedelec}). By representing a given $\xi\in L^2(\partial B)$ (or, more generally, any surface distribution on $\partial B$) in the form
\begin{equation} \label{SHexp}
\xi = \sum_{m=0}^\infty \xi_m Y_m, \; \xi_m := \inner{\xi}{Y_m}_{\partial B},
\end{equation}
it can be shown (cf. \cite{nedelec}) that the Sobolev space $H^t(\partial B)$, $t\ge0$, may be characterized as
\begin{equation*}
H^t(\partial B) = \left\{ \xi\in L^2(\partial B) \;\middle|\; \sum_{m=0}^\infty (1+\mu_m)^t \abs{\xi_m}^2 < \infty \right\}
\end{equation*}
with equivalent norm
\begin{equation*}
\norm{\xi}_{H^t(\partial B)}^2 := \sum_{m=0}^\infty (1+\mu_m)^t \abs{\xi_m}^2.
\end{equation*}
We use this characterization of Sobolev spaces on $\partial B$ to define the \emph{Bessel potential operator} $S_\delta := (I+\Delta_{\partial B})^{-\delta}$ for a real number $\delta$, which may be expressed as
\begin{equation*}
S_\delta \xi = \sum_{m=0}^\infty (1+\mu_m)^{-\delta} \xi_m Y_m.
\end{equation*}

We now proceed to introduce the $\delta$-Stekloff eigenvalue problem from \cite{cogar2020}, which depends upon the constitutive parameters $A$ and $n$ of an inhomogeneous medium that is supported in a bounded Lipschitz domain $D\subseteq\mathbb{R}^d$ with connected exterior. The matrix-valued function $A$ represents the anisotropic properties of the medium, and we assume that $A(x)$ is Hermitian and positive-definite for each $x\in\mathbb{R}^d$. The index of refraction $n$ is assumed to lie in $L^\infty(\mathbb{R}^d)$ such that $\Re(n)\ge\alpha>0$ and $\Im(n)\ge0$ a.e. in $D$. We assume that $D$ is compactly contained within $B$, implying that $A=I$ and $n=1$ in the open set $B\setminus\overline{D}$. We adopt one final assumption to ensure regularity results that will be necessary later (cf. \cite{bonito_guermond_luddens}).

\begin{assumption} \label{assumption:A}

We assume that $B$ admits a partition $\{\Omega_m\}_{m=1}^M$ with interface $\Sigma$ for which $A$ lies in the space
\begin{equation*}
W_\Sigma^{1,\infty}(B) := \left\{\tilde{A}\in(L^\infty(B))^{d\times d} \;\left|\; \begin{array}{cc} \nabla(\tilde{a}_{ij}|_{\Omega_m})\in(L^\infty(\Omega_m))^d \\ 1\le i,j\le d, \; 1\le m\le M \end{array}\right.\right\},
\end{equation*}
where $\tilde{a}_{ij}$ denotes the corresponding entry of the matrix-valued function $\tilde{A}$.

\end{assumption}

We equip the space $W_\Sigma^{1,\infty}(B)$ introduced in Assumption \ref{assumption:A} with the natural norm
\begin{equation*}
\norm{\tilde{A}}_{W_\Sigma^{1,\infty}(B)} = \norm{\tilde{A}}_{(L^\infty(B))^{d\times d}} + \sum_{m=1}^M \sum_{i,j=1}^d \norm{\nabla(\tilde{a}_{ij}|_{\Omega_m})}_{(L^\infty(\Omega_m))^d}.
\end{equation*}

For a given $\delta\ge0$, we consider the \emph{$\delta$-Stekloff eigenvalue problem} in which we seek $\lambda\in\mathbb{C}$ and a nonzero $u\in H^1(B)$ satisfying
\begin{subequations} \label{deltastek}
\begin{align}
\nabla\cdot A\nabla u + k^2 n u &= 0 \text{ in } B, \label{deltastek1} \\
\conormal{u}{A} + \lambda S_\delta u &= 0 \text{ on } \partial B, \label{deltastek2}
\end{align}
\end{subequations}
where $k>0$ is a fixed wave number resulting from the chosen frequency of incident wave and $\conormal{u}{A} = \nu\cdot A\nabla u$ is the conormal derivative. We refer to \cite{cogar2020} for a derivation of \eqref{deltastek} along with a discussion of its relationship to the problem of acoustic scattering by an inhomogeneous medium. We call a value of $\lambda$ for which \eqref{deltastek} has a nontrivial solution a \emph{$\delta$-Stekloff eigenvalue}, and we remark that with $\delta=0$ we recover the standard class of Stekloff eigenvalues introduced in \cite{cakoni_colton_meng_monk} (see also \cite{audibert_cakoni_haddar}). This problem with $A=I$ was introduced and studied in \cite{cogar2020}, where the primary aim was to use Lidski's theorem (cf. \cite{ringrose}) to prove that infinitely many eigenvalues exist even when $n$ is generally complex-valued. Since we have assumed that $A=I$ and $n=1$ in $B\setminus\overline{D}$, the same approach may be applied to the anisotropic problem to obtain the following result.

\begin{theorem} \label{theorem:lidski}

Assume that there exists no nontrivial $w\in H^1(B)$ satisfying
\begin{subequations} \label{dir}
\begin{align}
\nabla\cdot A\nabla w + k^2 n w &= 0 \text{ in } B, \label{dir1} \\
w &= 0 \text{ on } \partial B. \label{dir2}
\end{align}
\end{subequations}
If $\delta>\frac{d}{2}-1$, then the $\delta$-Stekloff eigenvalues form an infinite discrete set without finite accumulation point.

\end{theorem}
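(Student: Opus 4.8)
The plan is to recast \eqref{deltastek} as an eigenvalue problem for a trace–class operator on $L^2(\partial B)$ and then combine analytic Fredholm theory with Lidski's theorem, following the treatment of the case $A=I$ in \cite{cogar2020}. I would fix an auxiliary parameter $\lambda_0\in\mathbb{C}$ with $\Im(\lambda_0)<0$ and consider, for $g\in H^{-1/2}(\partial B)$, the problem of finding $u\in H^1(B)$ with $\nabla\cdot A\nabla u+k^2nu=0$ in $B$ and $\conormal{u}{A}+\lambda_0 S_\delta u=g$ on $\partial B$. Its sesquilinear form is coercive up to a compact perturbation (the $k^2n$ term and the boundary term are both compact on $H^1(B)$ by the Rellich theorem), so the Fredholm alternative applies; injectivity follows by taking the imaginary part of the form on a putative nontrivial solution $u$, which by $\Im(n)\ge0$, $\Im(\lambda_0)<0$ and the positivity of $S_\delta$ forces $S_{\delta/2}u=0$ on $\partial B$, hence $u=0$ on $\partial B$, and then $u\equiv0$ by the hypothesis that \eqref{dir} has no nontrivial solution. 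Writing $G_{\lambda_0}$ for the resulting bounded solution operator and setting $\mathcal{T}\psi:=S_{\delta/2}\big((G_{\lambda_0}S_{\delta/2}\psi)\big|_{\partial B}\big)$ defines a compact operator on $L^2(\partial B)$, and a short computation with \eqref{deltastek} shows (with multiplicities) that $\lambda$ is a $\delta$-Stekloff eigenvalue if and only if $(\lambda_0-\lambda)^{-1}$ is a nonzero eigenvalue of $\mathcal{T}$. Since $\lambda\mapsto I-(\lambda_0-\lambda)\mathcal{T}$ is an analytic family of the form identity-plus-compact that is invertible at $\lambda=\lambda_0$, the analytic Fredholm theorem already gives that the $\delta$-Stekloff eigenvalues form a discrete set with no finite accumulation point.

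It then remains to show that $\mathcal{T}$ has infinitely many nonzero eigenvalues. I would establish three structural facts. First, $\mathcal{T}$ is injective: if $\mathcal{T}\psi=0$ then $G_{\lambda_0}S_{\delta/2}\psi$ has vanishing Dirichlet trace, hence vanishes by the hypothesis on \eqref{dir}, forcing $\psi=0$. Second, $\mathcal{T}$ is dissipative: writing $u=G_{\lambda_0}S_{\delta/2}\psi$ and applying Green's first identity gives $\inner{\mathcal{T}\psi}{\psi}_{\partial B}=\int_B A\nabla u\cdot\nabla\overline{u}\,dx-k^2\int_B\overline{n}\abs{u}^2\,dx+\overline{\lambda_0}\,\norm{S_{\delta/2}u}_{L^2(\partial B)}^2$, whose imaginary part is $k^2\int_B\Im(n)\abs{u}^2\,dx+\abs{\Im(\lambda_0)}\norm{S_{\delta/2}u}_{L^2(\partial B)}^2\ge0$. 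Third — where the smoothing operator and the dimensional restriction enter — $\mathcal{T}$ is trace class precisely when $\delta>\tfrac{d}{2}-1$: since $A=I$, $n=1$ in a collar of $\partial B$ (because $D\subset\subset B$), the relevant elliptic regularity there is that of the constant-coefficient operator $\Delta+k^2$, so $G_{\lambda_0}$ carries $L^2(\partial B)$-data boundedly into $H^{3/2}$ of the collar and its trace into $H^1(\partial B)$, making $\mathcal{T}$ an operator of order $-(1+2\delta)$ on the $(d-1)$-dimensional manifold $\partial B$; such an operator lies in $\mathcal{S}_p(L^2(\partial B))$ for every $p>(d-1)/(1+2\delta)$, and $p=1$ is admissible exactly when $1+2\delta>d-1$. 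Granting these, suppose for contradiction that $\mathcal{T}$ has only finitely many nonzero eigenvalues; the associated Riesz projection decomposes $L^2(\partial B)=\mathcal{M}\oplus\mathcal{N}$ with $\mathcal{M}$ finite-dimensional, $\mathcal{N}$ infinite-dimensional and $\mathcal{T}$-invariant, and $\mathcal{T}|_{\mathcal{N}}$ trace class and quasinilpotent. By Lidski's theorem $\operatorname{tr}(\mathcal{T}|_{\mathcal{N}})=0$; writing $\mathcal{T}|_{\mathcal{N}}=A+iB$ with $A,B$ self-adjoint forces $\operatorname{tr}(A)=\operatorname{tr}(B)=0$, and since $B\ge0$ on $\mathcal{N}$ (dissipativity passes to the invariant subspace) we get $B=0$, so $\mathcal{T}|_{\mathcal{N}}=A$ is self-adjoint and quasinilpotent, hence $\mathcal{T}|_{\mathcal{N}}=0$, contradicting injectivity. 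Therefore $\mathcal{T}$ — and hence the set of $\delta$-Stekloff eigenvalues — is infinite.

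The step I expect to be the main obstacle is the third structural fact, the sharp Schatten-class estimate that pins the threshold $\delta>\tfrac{d}{2}-1$; it requires a careful singular-value (pseudodifferential) count on $\partial B$ together with the boundary elliptic-regularity bound described above. The anisotropy of $A$ creates no extra difficulty here, since the only regularity used is that of the constant-coefficient equation near $\partial B$ — the interior regularity afforded by Assumption~\ref{assumption:A} is needed only for the asymptotic analysis of later sections — so, as asserted, the argument proceeds exactly as in the isotropic case of \cite{cogar2020}.
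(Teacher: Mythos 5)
Your proposal is correct and follows essentially the same route as the paper, which proves this theorem simply by invoking the Lidski/trace-class argument of \cite{cogar2020}: reduce to a smoothed, dissipative, injective trace-class boundary operator (trace class precisely when $1+2\delta>d-1$, i.e.\ $\delta>\frac{d}{2}-1$, via the Weyl asymptotics of $\Delta_{\partial B}$) and apply Lidski's theorem, noting as you do that anisotropy is harmless because $A=I$ and $n=1$ in a collar of $\partial B$. Your reconstruction of that argument, including the dissipativity computation and the Riesz-projection/quasinilpotent contradiction, is sound.
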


With the eigenvalue problem of interest formulated and basic existence results in hand, we proceed in the next section to reformulate \eqref{deltastek} as a nonlinear eigenvalue problem in order to investigate the sensitivity of the $\delta$-Stekloff eigenvalues to changes in $A$ and $n$. We remark that Assumption \ref{assumption:A} is essential to our investigation, as it is sufficient to obtain higher regularity of solutions to \eqref{deltastek} and related nonhomogeoneous problems that will influence the sensitivity of the eigenvalues.

%The other relevant contribution of \cite{cogar2020} was to provide stability estimates for $\delta$-Stekloff eigenvalues, and by taking $\delta=0$ the standard Stekloff eigenvalues as well (cf. \cite{cakoni_colton_meng_monk}), using a factorization of the corresponding solution operator. The present aim is to improve upon this idea in three ways by reformulating \ref{deltastek} as a nonlinear eigenvalue problem and using the asymptotic correction formula given in \cite{moskow} (see \cite{cakoni_moskow_rome} for a similar approach to transmission eigenvalues). First, we obtain a first-order asymptotic formula for the perturbation of a simple $\delta$-Stekloff eigenvalue due to a change in $A$ and $n$ rather than solely an upper bound on the magnitude of the perturbation. Second, we use this first-order formula to arrive at improved upper bounds relative to the norm of the perturbation. Third, we extend our study of eigenvalue perturbations to the case of anisotropic media, which presents interesting regularity properties that necessitate the introduction of Assumption \ref{assumption:A} above. \par

\section{Perturbation estimates}
\label{sec:perturbation}

We consider a sequence $\{(A_h,n_h)\}_{h\ge0}$ of coefficient pairs that satisfy the assumptions for $(A,n)$ given in Section \ref{sec:eig}, where in particular Assumption \ref{assumption:A} is satisfied by all $A_h$ with the same partition. The pair $(A_0,n_0)$ corresponds to the reference medium, of which each pair $(A_h,n_h)$ with $h>0$ is viewed as a perturbation. We also redefine the inner product on $H^1(B)$ as
\begin{equation*}
(u,u')_{H^1(B)} = (A_0\nabla u,\nabla u')_B + k^2(u,u')_B \quad\forall u,u'\in H^1(B)
\end{equation*}
and consider the corresponding induced norm $\norm{\cdot}_{H^1(B)}$, which are equivalent to the standard definitions due to our assumptions on $A_0$. We begin by reformulating \eqref{deltastek} with $(A,n) = (A_h,n_h)$ as a nonlinear eigenvalue problem.

We define the operators $\mathbb{A}_h, \mathbb{K}_h, \mathbb{B}:H^1(B)\to H^1(B)$ by means of the Riesz representation theorem such that
\begin{align*}
(\mathbb{A}_h u,u')_{H^1(B)} &= (A_h\nabla u,\nabla u')_B + k^2(u,u')_B, \\
(\mathbb{K}_h u,u')_{H^1(B)} &= -k^2((n_h+1)u,u')_B, \\
(\mathbb{B} u,u')_{H^1(B)} &= -\inner{S_\delta u}{u'}_{\partial B}
\end{align*}
for all $u,u'\in H^1(B)$, and we observe that \eqref{deltastek} is equivalent to finding $\lambda\in\mathbb{C}$ and a nonzero $u\in H^1(B)$ for which
\begin{equation} \label{dsop}
(\mathbb{A}_h + \mathbb{K}_h)u = \lambda\mathbb{B}u.
\end{equation}
Note that each $\mathbb{A}_h$ is invertible and that, due to our redefinition of the inner product on $H^1(B)$, $\mathbb{A}_0$ is equal to the identity operator. By following the procedure outlined in \cite[Section 5.1]{moskow} for nonzero $\lambda$, this problem is in turn equivalent to the nonlinear eigenvalue problem of finding $\lambda\in\mathbb{C}\setminus\{0\}$ and a nonzero $u\in H^1(B)$ such that
\begin{equation} \label{nldsop}
\lambda T_h(\lambda) u = u,
\end{equation}
where the operator $T_h(\lambda):H^1(B)\to H^1(B)$ is defined by
\begin{equation} \label{Th}
T_h(\lambda) = -\lambda^{-1}\mathbb{A}_h^{-1}\mathbb{K}_h + \mathbb{A}_h^{-1}\mathbb{B}.
\end{equation}
Writing the $\delta$-Stekloff eigenvalues in terms of a nonlinear eigenvalue problem allows us to apply the asymptotic formulas of \cite{moskow}, namely the following result (cf. \cite[Corollary 4.1]{moskow}), which we have adapted to the specific case of operators on a Hilbert space.

\begin{theorem} \label{theorem:moskow}

Let $\{T_h(\lambda)\}_{h\ge0}$ be a sequence of functions of $\lambda$ which are analytic in a region $U$ of the complex plane and have values in the space of compact linear operators on a Hilbert space $\mathcal{H}$ such that $T_h(\lambda)\to T_0(\lambda)$ in norm as $h\to0$, uniformly for $\lambda\in U$. Let $\lambda_0\in U$ be a nonzero simple nonlinear eigenvalue for $h=0$, define $DT_0(\lambda_0)$ as the derivative of $T_0$ with respect to $\lambda$ evaluated at $\lambda_0$, and let $u_0$ be the normalized eigenfunction corresponding to $\lambda_0$. For sufficiently small $h>0$, there exists a simple nonlinear eigenvalue $\lambda_h$ of $T_h$ such that, if
\begin{equation*}
\lambda_0^2 (DT_0(\lambda_0) u_0,u_0)_{\mathcal{H}} \neq -1,
\end{equation*}
then we have the formula
\begin{align*}
\lambda_h = \lambda_0 &+ \frac{\lambda_0^2 ((T_0(\lambda_0) - T_h(\lambda_0))u_0,u_0)_{\mathcal{H}}}{1 + \lambda_0^2(DT_0(\lambda_0)u_0,u_0)_{\mathcal{H}}} \\
&+ O\left( \sup_{\lambda\in U} \norm{(T_h(\lambda) - T_0(\lambda))|_{R(E_{\lambda_0})}} \norm{(T_h^*(\lambda) - T_0^*(\lambda))|_{R(E_{\lambda_0}^*)}} \right),
\end{align*}
where $E_{\lambda_0}$ is the spectral projection onto the one-dimensional eigenspace corresponding to $\lambda_0$.

\end{theorem}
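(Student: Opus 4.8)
The statement is the Hilbert-space specialization of \cite[Corollary 4.1]{moskow}, so the plan is to recast the nonlinear eigenvalue condition $\lambda T_h(\lambda)u=u$ as a scalar equation, run the perturbation argument of that corollary, and re-derive the correction formula in order to pin down the normalizations and the constant in the denominator. First, since each $T_h(\lambda)$ is compact, a number $\lambda$ is a nonlinear eigenvalue of $T_h$ exactly when $1\in\sigma(\lambda T_h(\lambda))$, and simplicity of $\lambda_0$ means that $1$ is an algebraically simple eigenvalue of $\lambda_0 T_0(\lambda_0)$; hence the Riesz projection
\[
E_{\lambda_0}=\frac{1}{2\pi i}\oint_\Gamma\bigl(\zeta I-\lambda_0 T_0(\lambda_0)\bigr)^{-1}\,d\zeta
\]
onto its eigenspace $\operatorname{span}\{u_0\}$ is rank one, where $\Gamma$ is a small circle about $1$ enclosing no other point of $\sigma(\lambda_0 T_0(\lambda_0))$.

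Using the joint analyticity of $T_h(\lambda)$ in $\lambda$ and the uniform norm convergence $T_h\to T_0$, I would then show that for $\lambda$ in a neighbourhood $V$ of $\lambda_0$ and $h$ small the resolvent $(\zeta I-\lambda T_h(\lambda))^{-1}$ exists and is analytic for $\zeta\in\Gamma$, so the spectral projection $E_h(\lambda):=\frac{1}{2\pi i}\oint_\Gamma(\zeta I-\lambda T_h(\lambda))^{-1}\,d\zeta$ is well defined, analytic in $\lambda$, rank one (being a norm-small perturbation of the rank-one $E_{\lambda_0}$), and convergent to $E_0(\lambda)$ uniformly for $\lambda\in V$. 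On its one-dimensional range $\lambda T_h(\lambda)$ acts as multiplication by $\mu_h(\lambda):=\operatorname{tr}\bigl(\lambda T_h(\lambda)E_h(\lambda)\bigr)$, which is analytic in $\lambda$, converges uniformly to $\mu_0$ as $h\to0$, and satisfies: $\lambda\in V$ is a nonlinear eigenvalue of $T_h$ if and only if $\mu_h(\lambda)=1$. Differentiating $\mu_0$ at $\lambda_0$ with the standard formula for the derivative of a simple eigenvalue of an analytic operator family, together with $T_0(\lambda_0)u_0=\lambda_0^{-1}u_0$ and the normalization $\norm{u_0}_{\mathcal H}=1$, gives
\[
\mu_0'(\lambda_0)=\frac{1}{\lambda_0}\bigl(1+\lambda_0^2\,(DT_0(\lambda_0)u_0,u_0)_{\mathcal H}\bigr),
\]
so the hypothesis $\lambda_0^2(DT_0(\lambda_0)u_0,u_0)_{\mathcal H}\neq-1$ says exactly that $\lambda_0$ is a simple zero of $\mu_0(\cdot)-1$. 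Rouché's theorem applied to the analytic functions $\mu_h(\cdot)-1$, which converge uniformly to $\mu_0(\cdot)-1$ on a small disc about $\lambda_0$, then yields for all small $h>0$ a unique zero $\lambda_h$ in that disc; it is simple, $\lambda_h\to\lambda_0$, and since $E_h(\lambda_h)$ is rank one this $\lambda_h$ is the asserted simple nonlinear eigenvalue of $T_h$.

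To extract the correction formula I would write $\bigl(\mu_h(\lambda_h)-\mu_h(\lambda_0)\bigr)+\bigl(\mu_h(\lambda_0)-1\bigr)=0$, expanding the first bracket by analyticity and the second by the first-order perturbation formula for the simple eigenvalue of $\lambda_0 T(\lambda_0)$:
\[
\mu_h(\lambda_0)-1=\mu_h(\lambda_0)-\mu_0(\lambda_0)=\lambda_0\bigl((T_h(\lambda_0)-T_0(\lambda_0))u_0,u_0\bigr)_{\mathcal H}+R_h,
\]
where $R_h$ arises from expanding $(\zeta I-\lambda_0 T_h(\lambda_0))^{-1}$ about $(\zeta I-\lambda_0 T_0(\lambda_0))^{-1}$ in a Neumann series inside the trace integral, and is bounded, after tracking which factors are pinned to $R(E_{\lambda_0})$ and which to $R(E_{\lambda_0}^*)$, by $\norm{(T_h(\lambda_0)-T_0(\lambda_0))|_{R(E_{\lambda_0})}}\,\norm{(T_h^*(\lambda_0)-T_0^*(\lambda_0))|_{R(E_{\lambda_0}^*)}}$. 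Solving for $\lambda_h-\lambda_0$, dividing by $-\mu_h'(\lambda_0)$, replacing $\mu_h'(\lambda_0)$ by $\mu_0'(\lambda_0)$ (the difference feeding only into the remainder), and inserting the expression above for $\mu_0'(\lambda_0)$ produces the claimed expansion; here one uses that the leading term is simultaneously dominated by each of the two one-sided restricted perturbation norms, so that the quadratic-in-$(\lambda_h-\lambda_0)$ contributions collapse into the stated product and the supremum over $\lambda\in U$ absorbs the residual $\lambda$-dependence.

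The delicate point, which I expect to be the main obstacle, is precisely this last bookkeeping: showing that the error is the \emph{product} of the perturbation norm and the adjoint-perturbation norm restricted to the one-dimensional spaces $R(E_{\lambda_0})$ and $R(E_{\lambda_0}^*)$, and is uniform for $\lambda\in U$, rather than a cruder $O(\norm{T_h-T_0})$ or $O(\norm{T_h-T_0}^2)$. This is exactly what is carried out in \cite{moskow}, and since the hypotheses used there — compactness of the $T_h(\lambda)$, analyticity in $\lambda$, and uniform norm convergence — all hold here, the argument transcribes to the present Hilbert-space setting with the duality pairing replaced throughout by $(\cdot,\cdot)_{\mathcal H}$; the only genuinely new verifications are the elementary ones above identifying $E_{\lambda_0}$, $\mu_0'(\lambda_0)$, and the leading term with the objects appearing in the statement.
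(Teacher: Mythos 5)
The paper does not prove this theorem at all: it is imported, with notation adapted to a Hilbert space, from \cite[Corollary 4.1]{moskow}, so there is no internal argument to compare yours against. Your outline is in fact the architecture by which the cited result is established: reduce $\lambda T_h(\lambda)u=u$ to the scalar equation $\mu_h(\lambda)=1$ through a rank-one Riesz projection, locate $\lambda_h$ by Rouch\'e, and expand to first order, with the refined remainder coming from an Osborn-type estimate in which one factor of the perturbation is pinned to $R(E_{\lambda_0})$ and the other to $R(E_{\lambda_0}^*)$. Since you defer exactly that decisive product-of-restricted-norms estimate to \cite{moskow} itself, what you have is a faithful reconstruction of the cited proof rather than a self-contained one; that is no less than the paper offers (a bare citation), but it should be acknowledged as such.

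There is, however, one concrete gap in the portion you do carry out. The operators are not self-adjoint, so the derivative and first-order perturbation formulas for a simple eigenvalue of $\lambda T_0(\lambda)$ place the left (adjoint) eigenvector $u_0^*$, normalized by $(u_0,u_0^*)_{\mathcal H}=1$, in the second slot: one gets $\mu_0'(\lambda_0)=\lambda_0^{-1}\bigl(1+\lambda_0^2(DT_0(\lambda_0)u_0,u_0^*)_{\mathcal H}\bigr)$ and $\mu_h(\lambda_0)-\mu_0(\lambda_0)=\lambda_0\bigl((T_h(\lambda_0)-T_0(\lambda_0))u_0,u_0^*\bigr)_{\mathcal H}+R_h$. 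Your computation writes $u_0$ with $\norm{u_0}_{\mathcal H}=1$ in both slots, which the standard formula yields only when $u_0^*$ can be identified with $u_0$ (e.g.\ in the self-adjoint or suitably symmetric case). To arrive at the statement as printed you must either justify that identification or read ``normalized eigenfunction'' in the sense of the dual normalization; this is precisely the adaptation step from the Banach-space formulation of \cite{moskow}, which is phrased with dual basis functions, that your sketch (like the paper) passes over without comment, and it is where a genuinely complete proof has to do some work.
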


We prove that $\{T_h(\lambda)\}_{h\ge0}$ as defined in \eqref{Th} satisfies the hypotheses of Theorem \ref{theorem:moskow} and compute the corresponding asymptotic formula. We begin with the following lemma, in which we state the basic properties of $\{T_h(\lambda)\}$.

\begin{lemma} \label{lemma:Th_basic}

For each $h\ge0$, the operator $T_h(\lambda)$ is compact for each $\lambda\in\mathbb{C}\setminus\{0\}$, and the mapping $\lambda\mapsto T_h(\lambda)$ is analytic in the domain $U = \mathbb{C}\setminus\overline{B_R(0)}$ for every $R>0$.

\end{lemma}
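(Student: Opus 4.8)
The plan is to read off both assertions directly from the formula $T_h(\lambda) = -\lambda^{-1}\mathbb{A}_h^{-1}\mathbb{K}_h + \mathbb{A}_h^{-1}\mathbb{B}$ in \eqref{Th}, using that $\mathbb{A}_h^{-1}$ is a bounded operator on $H^1(B)$ (as already noted after \eqref{dsop}, since the sesquilinear form defining $\mathbb{A}_h$ is bounded and, by the positive-definiteness of $A_h$, coercive on $H^1(B)$). Since composition with bounded operators and scaling by the finite scalar $-\lambda^{-1}$ (for $\lambda\neq0$) preserve compactness, it suffices to show that $\mathbb{K}_h$ and $\mathbb{B}$ are themselves compact and then to track the $\lambda$-dependence.

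For the compactness of $\mathbb{K}_h$, I would estimate, from its defining identity and Cauchy--Schwarz,
\[
\norm{\mathbb{K}_h u}_{H^1(B)} = \sup_{\norm{u'}_{H^1(B)} = 1} \abs{k^2((n_h+1)u,u')_B} \le C\norm{u}_{L^2(B)},
\]
with $C$ depending only on $k$ and $\norm{n_h}_{L^\infty(B)}$, so that $\mathbb{K}_h$ factors as a bounded linear map $L^2(B)\to H^1(B)$ precomposed with the embedding $H^1(B)\hookrightarrow L^2(B)$; the latter is compact by the Rellich--Kondrachov theorem, whence $\mathbb{K}_h$ is compact. For $\mathbb{B}$ the analogous estimate yields $\norm{\mathbb{B}u}_{H^1(B)} \le \norm{S_\delta u}_{L^2(\partial B)}\sup_{\norm{u'}_{H^1(B)}=1}\norm{u'}_{L^2(\partial B)} \le C\norm{u}_{L^2(\partial B)}$, using that $S_\delta$ is bounded on $L^2(\partial B)$ (here $\delta\ge0$) and that the trace map $H^1(B)\to L^2(\partial B)$ is bounded. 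Since this trace map is in fact compact---it factors through the bounded trace $H^1(B)\to H^{1/2}(\partial B)$ and the compact embedding $H^{1/2}(\partial B)\hookrightarrow L^2(\partial B)$---the operator $\mathbb{B}$ is compact. Composing with the bounded $\mathbb{A}_h^{-1}$ and scaling by $-\lambda^{-1}$ then exhibits $T_h(\lambda)$ as a sum of two compact operators, hence compact, for every $\lambda\in\mathbb{C}\setminus\{0\}$.

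For analyticity, I would observe that the summand $\mathbb{A}_h^{-1}\mathbb{B}$ is independent of $\lambda$, while $-\lambda^{-1}\mathbb{A}_h^{-1}\mathbb{K}_h$ is a fixed bounded operator multiplied by the scalar function $\lambda\mapsto-\lambda^{-1}$, which is holomorphic on $\mathbb{C}\setminus\{0\}$; consequently the difference quotients of $-\lambda^{-1}\mathbb{A}_h^{-1}\mathbb{K}_h$ converge in operator norm to $\lambda^{-2}\mathbb{A}_h^{-1}\mathbb{K}_h$, so $\lambda\mapsto T_h(\lambda)$ is analytic on all of $\mathbb{C}\setminus\{0\}$, and in particular on $U=\mathbb{C}\setminus\overline{B_R(0)}$ for every $R>0$, since $0\notin U$. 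There is no genuine obstacle here; the only points requiring a little care are the factorizations of $\mathbb{K}_h$ and $\mathbb{B}$ through a compact embedding and the (already granted) boundedness of $\mathbb{A}_h^{-1}$ on $H^1(B)$.
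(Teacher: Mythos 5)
Your proof is correct and follows essentially the same route as the paper: both rest on the Cauchy--Schwarz/trace estimates $\norm{\mathbb{K}_h u}_{H^1(B)}\le C\norm{u}_{L^2(B)}$ and $\norm{\mathbb{B}u}_{H^1(B)}\le C\norm{S_\delta u}_{L^2(\partial B)}$, compactness of the embeddings $H^1(B)\hookrightarrow L^2(B)$ and $H^{1/2}(\partial B)\hookrightarrow L^2(\partial B)$, boundedness of $\mathbb{A}_h^{-1}$, and the evident holomorphy of $\lambda\mapsto-\lambda^{-1}$. The only cosmetic difference is that you phrase compactness via factorization through compact embeddings while the paper uses a weakly convergent sequence argument; the substance is identical.
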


\begin{proof} We begin by showing that the operators $\mathbb{K}_h$ and $\mathbb{B}$ are both compact. For every $u\in H^1(B)$ we see from the assumption that $n_h\in L^\infty(B)$ and the Cauchy-Schwarz inequality that
\begin{align*}
\norm{\mathbb{K}_h u}_{H^1(B)} &= \sup_{u'\neq0} \frac{\abs{(\mathbb{K}_h u,u')_{H^1(B)}}}{\norm{u'}_{H^1(B)}} = k^2 \sup_{u'\neq0} \frac{\abs{((n_h+1) u,u')_B}}{\norm{u'}_{H^1(B)}} \le C_h \norm{u}_{L^2(B)},
\end{align*}
and by another application of the Cauchy-Schwarz inequality and the trace theorem we obtain
\begin{equation*}
\norm{\mathbb{B} u}_{H^1(B)} = \sup_{u'\neq0} \frac{\abs{(\mathbb{B} u,u')_{H^1(B)}}}{\norm{u'}_{H^1(B)}} = \sup_{u'\neq0} \frac{\abs{\inner{S_\delta u}{u'}_{\partial B}}}{\norm{u'}_{H^1(B)}} \le C\norm{S_\delta u}_{\partial B}.
\end{equation*}
Suppose that a sequence $\{u_j\}$ in $H^1(B)$ converges weakly to some $u_0\in H^1(B)$, in which case the compact embedding of $H^1(B)$ into $L^2(B)$ implies that $u_j\to u_0$ in $L^2(B)$ and the compact embedding of $H^{1/2}(\partial B)$ into $L^2(\partial B)$ and boundedness of $S_\delta$ imply that $S_\delta u_j\to S_\delta u_0$ in $L^2(\partial B)$. It follows that
\begin{equation*}
\norm{\mathbb{K}_h (u_j - u_0)}_{H^1(B)} \le C_h \norm{u_j - u_0}_{L^2(B)} \to 0
\end{equation*}
and
\begin{equation*}
\norm{\mathbb{B} (u_j - u_0)}_{H^1(B)} \le C \norm{S_\delta u_j - S_\delta u_0}_{\partial B} \to 0
\end{equation*}
as $j\to\infty$. As a consequence, we observe that $\mathbb{K}_h u_j\to \mathbb{K}_h u_0$ and $\mathbb{B}u_j\to\mathbb{B}u_0$ in $H^1(B)$, and we conclude that $\mathbb{K}_h$ and $\mathbb{B}$ are compact. Since composition with bounded operators preserves compactness, we have shown that $T_h(\lambda)$ is compact for each $\lambda\in\mathbb{C}\setminus\{0\}$. Analyticity of the mapping $\lambda\mapsto T_h(\lambda)$ for $\lambda\in U := \mathbb{C}\setminus\overline{B_R(0)}$ is clear from the definition of $T_h(\lambda)$. \qed

\end{proof}

Before we proceed to show norm convergence of $\{T_h(\lambda)\}$ to $T_0(\lambda)$, we require a regularity result that depends strongly on Assumption \ref{assumption:A}.

\begin{proposition} \label{prop:A}

If $w_h\in H^1(B)$ satisfies
\begin{subequations} \label{wh}
\begin{align}
\nabla\cdot A_h\nabla w_h - k^2 w_h &= f \text{ in } B, \\
\conormal{w_h}{A_h} &= \xi \text{ on } \partial B,
\end{align}
\end{subequations}
for given $f\in \tilde{H}^{s-1}(B)$ and $\xi\in H^{1/2}(\partial B)$, then there exists $\tau_h\in\left(0,\frac{1}{2}\right)$ such that for every $s\in[0,\tau_h)$ we have $w_h\in H^{s+1}(B)$ with the estimate
\begin{equation} \label{wh_est}
\norm{w_h}_{H^{s+1}(B)} \le C_{s,h} \left( \norm{f}_{\tilde{H}^{s-1}(B)} + \norm{\xi}_{H^{1/2}(\partial B)} \right).
\end{equation}
Moreover, if $A_h\to A_0$ in $W_\Sigma^{1,\infty}(B)$ as $h\to0$, then both $\tau_h$ and $C_{s,h}$ can be chosen independently of $h$ for sufficiently small $h$.

\end{proposition}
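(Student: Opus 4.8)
The plan is to establish the regularity estimate for a fixed $h$ first and then track the dependence of the constants on $h$ under the convergence hypothesis $A_h\to A_0$ in $W_\Sigma^{1,\infty}(B)$. For the fixed-$h$ part, the key observation is that the coefficient $A_h$ is piecewise-$W^{1,\infty}$ relative to the fixed partition $\{\Omega_m\}$ with interface $\Sigma$, which is precisely the setting of the regularity theory for transmission-type elliptic problems in \cite{bonito_guermond_luddens}. First I would put \eqref{wh} in weak form and verify coercivity of the bilinear form $(A_h\nabla\cdot,\nabla\cdot)_B+k^2(\cdot,\cdot)_B$ on $H^1(B)$, which holds uniformly because $A_h$ is Hermitian positive-definite with bounds controlled by $\norm{A_h}_{W_\Sigma^{1,\infty}(B)}$ and the ellipticity constant; this gives existence and uniqueness of $w_h\in H^1(B)$ and the bound $\norm{w_h}_{H^1(B)}\le C(\norm{f}_{\tilde H^{-1}(B)}+\norm{\xi}_{H^{-1/2}(\partial B)})$. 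Then I would invoke the piecewise-smooth-coefficient shift theorem: there is a threshold $\tau_h\in(0,\tfrac12)$, determined by the geometry of $\Sigma$, the partition, the ellipticity constant and $\norm{A_h}_{W_\Sigma^{1,\infty}(B)}$, such that for data $f\in\tilde H^{s-1}(B)$, $\xi\in H^{1/2}(\partial B)$ with $s\in[0,\tau_h)$ the solution gains regularity to $H^{s+1}(B)$ with \eqref{wh_est}. The Neumann (conormal) boundary condition is handled exactly as in the cited reference since $\partial B$ is smooth and $A_h=I$ near $\partial B$, so the boundary contributes no loss beyond the interface.

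For the uniformity claim, the strategy is to note that all the quantities on which $\tau_h$ and $C_{s,h}$ depend are continuous (in fact, controlled) functions of the data $A_h$ measured in the $W_\Sigma^{1,\infty}(B)$-norm, and that these data converge. Concretely, if $A_h\to A_0$ in $W_\Sigma^{1,\infty}(B)$ then for small $h$ the ellipticity constants, the $L^\infty$-bounds $\norm{A_h}_{(L^\infty(B))^{d\times d}}$, and the piecewise-gradient bounds $\sum_{m,i,j}\norm{\nabla(a_{ij}|_{\Omega_m})}_{(L^\infty(\Omega_m))^d}$ are all bounded by, say, twice their $h=0$ values. Since the threshold $\tau_h$ produced by the regularity theory and the constant $C_{s,h}$ in \eqref{wh_est} can be taken monotone (nonincreasing for $\tau$, nondecreasing for $C$) in these structural bounds — this monotone dependence is what one extracts from the proof in \cite{bonito_guermond_luddens}, or alternatively from a compactness/contradiction argument — one may simply fix $\tau_* \in (0, \tau_0)$ and a constant $C_{s,*}$ valid for the whole bounded family, and these serve as the uniform choices for all sufficiently small $h$.

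The main obstacle I anticipate is making the uniformity genuinely rigorous rather than merely plausible: the cited regularity theorem is typically stated for a single fixed coefficient, and its threshold exponent is defined through a somewhat implicit construction (e.g. via the strength of corner/edge singularities at $\Sigma$ or via a Campanato-type iteration), so one must argue that this construction depends on $A_h$ only through a finite list of norms and ellipticity constants. The cleanest route is probably a contradiction argument: if no uniform $\tau_*,C_{s,*}$ existed, one would extract a subsequence $A_{h_j}$ with $\tau_{h_j}\to 0$ or $C_{s,h_j}\to\infty$, yet $A_{h_j}\to A_0$ strongly in $W_\Sigma^{1,\infty}(B)$, and one would then need a lower-semicontinuity property of the threshold (or upper-semicontinuity of the constant) under this convergence — which itself has to be harvested from the structure of the proof in \cite{bonito_guermond_luddens}. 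I would state this as a consequence of the quantitative form of that reference's result and relegate the bookkeeping to a remark, since the essential point is just that the partition and interface $\Sigma$ are held fixed throughout the perturbation.
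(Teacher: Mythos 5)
Your overall strategy---reduce to the interface regularity theory of Bonito--Guermond--Luddens for each fixed coefficient, then argue that $\tau_h$ and $C_{s,h}$ can be chosen uniformly because $A_h\to A_0$ in $W_\Sigma^{1,\infty}(B)$ with the partition and interface held fixed---is the same as the paper's, and your treatment of the uniformity claim (monotone dependence on structural bounds, or a compactness/contradiction argument harvested from the structure of the cited proof) is if anything more explicit than the paper's, which simply asserts that this can be read off from the proof of Theorem 3.1 in that reference.

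The gap is in the fixed-$h$ step: you invoke the shift theorem directly for the problem with inhomogeneous conormal data $\xi\in H^{1/2}(\partial B)$ and assert that the boundary condition is ``handled exactly as in the cited reference.'' But the regularity theorem being cited is applied in the paper only to a problem with \emph{homogeneous} conormal condition and distributional source in $\tilde{H}^{s-1}(B)$; it does not by itself accommodate nonzero Neumann data, which is precisely why the paper's proof constructs a lifting $\varphi\in H^1(B)$ solving $\Delta\varphi-k^2\varphi=0$ in $B$ with $\normal{\varphi}=\xi$ on $\partial B$. Subtracting it, $w_h-\varphi$ satisfies a homogeneous conormal condition (here the fact that $A_h=I$ near $\partial B$ is used to identify the conormal and normal derivatives on $\partial B$) and has the new source $f+\nabla\cdot(I-A_h)\nabla\varphi$. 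The substantive work is then to show this source lies in the admissible data class: smoothness of $\partial B$ gives $\varphi\in H^2(B)$ with $\norm{\varphi}_{H^2(B)}\le C\norm{\xi}_{H^{1/2}(\partial B)}$, and Assumption \ref{assumption:A} together with Proposition 2.1 of the cited reference makes $A_h$ a multiplier of $(H^s(B))^d$ for $s\in[0,\tfrac12)$, whence $(I-A_h)\nabla\varphi\in(H^s(B))^d$ and $\nabla\cdot(I-A_h)\nabla\varphi\in\tilde{H}^{s-1}(B)$. Your proposal omits this reduction and the multiplier argument entirely, so the application of the shift theorem to \eqref{wh} with $\xi\neq0$ is unsupported as written; this is the missing idea you would need to supply. (Your opening coercivity/well-posedness step is harmless but unnecessary, since the proposition assumes $w_h$ is given.)
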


\begin{proof} We begin by defining a lifting function $\varphi\in H^1(B)$ as the unique solution of
\begin{align*}
\Delta\varphi - k^2\varphi &= 0 \text{ in } B, \\
\normal{\varphi} &= \xi \text{ on } \partial B,
\end{align*}
from which we observe that $w_h - \varphi\in H^1(B)$ satisfies
\begin{align*}
\nabla\cdot A_h\nabla(w_h-\varphi) - k^2 (w_h-\varphi) &= f + \nabla\cdot(I-A_h)\nabla\varphi \text{ in } B, \\
\conormal{(w_h-\varphi)}{A_h} &= 0 \text{ on } \partial B,
\end{align*}
with the boundary condition following from the assumption that $A_h = I$ in the open set $B\setminus\overline{D}$. Due to smoothness of $\partial B$, standard elliptic regularity estimates imply that $\varphi\in H^2(B)$ with $\norm{\varphi}_{H^2(B)} \le C\norm{\xi}_{H^{1/2}(\partial B)}$ (cf. \cite{adams}). It follows that $\nabla\varphi\in (H^1(B))^d$, which in turn implies that $\nabla\varphi\in (H^s(B))^d$ for all $s\in[0,1]$. By Proposition 2.1 in \cite{bonito_guermond_luddens}, from Assumption \ref{assumption:A} it follows that $A_h$ is a multiplier of the space $(H^s(B))^d$ for $s\in[0,\frac{1}{2})$, and we obtain $(I-A_h)\nabla\varphi\in(H^s(B))^d$ and hence $\nabla\cdot(I-A_h)\nabla\varphi\in \tilde{H}^{s-1}(B)$ for $s\in[0,\frac{1}{2})$. \par
Since $f+\nabla\cdot(I-A_h)\nabla\varphi\in \tilde{H}^{s-1}(B)$, we now apply Theorem 3.1 in \cite{bonito_guermond_luddens} to conclude that there exists $\tau_h\in\left(0,\frac{1}{2}\right)$ dependent upon $A_h$, $B$, and the partition $\{\Omega_m\}_{m=1}^M$ from Assumption \ref{assumption:A} such that $w_h-\varphi\in H^{s+1}(B)$ for $s\in[0,\tau_h)$ with the estimate
\begin{equation*}
\norm{w_h-\varphi}_{H^{s+1}(B)} \le C_{s,h} \norm{f + \nabla\cdot(I-A_h)\nabla\varphi}_{\tilde{H}^{s-1}(B)}.
\end{equation*}
The construction of $\varphi$ and its subsequent elliptic regularity estimate together imply \eqref{wh_est}. Finally, it can be seen from the proof of Theorem 3.1 in \cite{bonito_guermond_luddens} that convergence of $\{A_h\}$ to $A_0$ in $W_\Sigma^{1,\infty}(B)$ is sufficient to permit a choice of $\tau_h$ and $C_{s,h}$ that is independent of $h$ when $h$ is sufficiently small. \qed

\end{proof}

\begin{remark} \label{remark:A}

As a consequence of Proposition \ref{prop:A}, we will from this point forward assume that $A_h\to A_0$ in $W_\Sigma^{1,\infty}(B)$, allowing us to choose $\tau = \tau_h$ independently of $h$ and fix an element $s$ of $(0,\tau)$. 

\end{remark}

The following estimate concerning the operator $\mathbb{A}_h^{-1} - \mathbb{A}_0^{-1}$ will also be useful in many of the following results.

\begin{proposition} \label{prop:Adiff}

For sufficiently small $h>0$ we have the estimate
\begin{equation} \label{Adiff}
\abs{((\mathbb{A}_h^{-1} - \mathbb{A}_0^{-1})u,v)_{H^1(B)}} \le C_s \norm{A_h - A_0}_{(L^{d/s}(B))^{d\times d}} \norm{u}_{H^1(B)} \norm{v}_{H^{s+1}(B)}
\end{equation}
for all $u\in H^1(B)$ and $v\in H^{s+1}(B)$.

\end{proposition}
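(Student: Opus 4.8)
The plan is to turn the operator difference $\mathbb{A}_h^{-1} - \mathbb{A}_0^{-1}$ into an $L^1(B)$ pairing of $A_h - A_0$ against two gradients, and then to balance the exponents in the three-term H\"older inequality against the Sobolev exponent of Theorem \ref{theorem:sobolev}(iii). Since $\mathbb{A}_0 = I$, I would set $z := \mathbb{A}_h^{-1}u$, so that $(\mathbb{A}_h^{-1} - \mathbb{A}_0^{-1})u = z - u$. Taking the test function $v$ in the identity defining $z$, namely
\begin{equation*}
(A_h\nabla z,\nabla v)_B + k^2(z,v)_B = (\mathbb{A}_h z, v)_{H^1(B)} = (u,v)_{H^1(B)} = (A_0\nabla u,\nabla v)_B + k^2(u,v)_B,
\end{equation*}
and recalling the redefined inner product on $H^1(B)$, the $k^2$ terms cancel and one is left with
\begin{equation*}
((\mathbb{A}_h^{-1} - \mathbb{A}_0^{-1})u, v)_{H^1(B)} = (z - u, v)_{H^1(B)} = -((A_h - A_0)\nabla z, \nabla v)_B,
\end{equation*}
which is nothing but the resolvent identity $\mathbb{A}_h^{-1} - \mathbb{A}_0^{-1} = -(\mathbb{A}_h - \mathbb{A}_0)\mathbb{A}_h^{-1}$ paired with $v$.

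Next I would estimate the right-hand side with the matrix-vector form of the three-term H\"older inequality (Theorem \ref{theorem:holder}) applied to the triple $(A_h - A_0,\nabla z,\nabla v)$ with exponents $p = d/s$, $q = 2$, $r = 2d/(d-2s)$, which satisfy $\tfrac1p + \tfrac1q + \tfrac1r = 1$ and for which the choice of $r$ is dictated by the embedding $H^s(B)\hookrightarrow L^{2d/(d-2s)}(B)$ of Theorem \ref{theorem:sobolev}(iii) (available since $2s < 1 \le d$). This yields
\begin{equation*}
\abs{((A_h - A_0)\nabla z,\nabla v)_B} \le \norm{A_h - A_0}_{(L^{d/s}(B))^{d\times d}}\,\norm{\nabla z}_{(L^2(B))^d}\,\norm{\nabla v}_{(L^{2d/(d-2s)}(B))^d}.
\end{equation*}
For the last factor, $\nabla v \in (H^s(B))^d$ by the definition of the $H^{s+1}(B)$ norm, and Theorem \ref{theorem:sobolev}(iii) gives $\norm{\nabla v}_{(L^{2d/(d-2s)}(B))^d} \le C_s\norm{v}_{H^{s+1}(B)}$. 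For the middle factor, I would use the equivalence of $\norm{\cdot}_{H^1(B)}$ with the standard norm to bound $\norm{\nabla z}_{(L^2(B))^d} \le C\norm{z}_{H^1(B)} = C\norm{\mathbb{A}_h^{-1}u}_{H^1(B)}$, and then observe that $\norm{\mathbb{A}_h^{-1}}_{H^1(B)\to H^1(B)}$ is bounded uniformly in $h$ for small $h$: from $((\mathbb{A}_h - \mathbb{A}_0)w,w')_{H^1(B)} = ((A_h - A_0)\nabla w,\nabla w')_B$ one gets $\norm{\mathbb{A}_h - \mathbb{A}_0}_{H^1(B)\to H^1(B)} \le C\norm{A_h - A_0}_{(L^\infty(B))^{d\times d}} \to 0$, so $\mathbb{A}_h$ is a small perturbation of $\mathbb{A}_0 = I$ and a Neumann-series argument gives $\norm{\mathbb{A}_h^{-1}} \le 2$ for $h$ small. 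Collecting the three factors produces \eqref{Adiff}.

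I do not expect a serious obstacle: the real content is the algebraic cancellation in the first step and the choice of H\"older exponents so that the factor carrying $A_h - A_0$ lands precisely in $L^{d/s}(B)$, as claimed. The one point that deserves a sentence of justification is the uniform-in-$h$ bound on $\mathbb{A}_h^{-1}$, which holds because $\mathbb{A}_0 = I$ and $\norm{A_h - A_0}_{(L^\infty(B))^{d\times d}} \to 0$, a consequence of the assumed convergence $A_h \to A_0$ in $W_\Sigma^{1,\infty}(B)$ (Remark \ref{remark:A}). I note in passing that, in contrast with the neighboring results, this argument does not invoke Proposition \ref{prop:A}: only $H^1(B)$-regularity of $z = \mathbb{A}_h^{-1}u$ is used, the extra smoothness being supplied by the hypothesis $v \in H^{s+1}(B)$.
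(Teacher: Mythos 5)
Your proposal is correct and follows essentially the same route as the paper's proof: the same cancellation identity reducing the pairing to $-((A_h-A_0)\nabla\,\mathbb{A}_h^{-1}u,\nabla v)_B$, the same three-term H\"older application with $p=d/s$, $q=2$, $r=2d/(d-2s)$, and the same Sobolev embedding $H^s(B)\hookrightarrow L^{2d/(d-2s)}(B)$ for $\nabla v$. The only cosmetic difference is the uniform-in-$h$ bound on $\mathbb{A}_h^{-1}$, which the paper gets by absorbing $((A_h-A_0)\nabla v_h,\nabla v_h)_B$ into the coercive left-hand side while you invoke a Neumann series about $\mathbb{A}_0=I$; the two arguments are equivalent.
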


\begin{proof} For given $u\in H^1(B)$ and $v\in H^{s+1}(B)$, we define $v_h = \mathbb{A}_h^{-1} u$, and we observe that
\begin{equation*}
\abs{((\mathbb{A}_h^{-1} - \mathbb{A}_0^{-1})u,v)_{H^1(B)}} = \abs{(v_h - v_0,v)_{H^1(B)}} = \abs{((A_h - A_0)\nabla v_h,\nabla v)_B}.
\end{equation*}
Since $\nabla v\in (H^s(B))^d$, the Sobolev embedding theorem implies that $\nabla v\in (L^{2d/(d-2s)}(B))^d$ for $d=2,3$, in which case we may apply the three-term H{\"o}lder's inequality with $p = \frac{d}{s}$, $q = 2$, and $r = \frac{2d}{d-2s}$ to obtain
\begin{align*}
&\abs{((A_h - A_0)\nabla v_h,\nabla v)_B} \\
&\hspace{4em} \le \norm{A_h - A_0}_{(L^{d/s}(B))^{d\times d}} \norm{\nabla v_h}_{(L^2(B))^d} \norm{\nabla v}_{(L^{2d/(d-2s)}(B))^d} \\
&\hspace{4em} \le C_s \norm{A_h - A_0}_{(L^{d/s}(B))^{d\times d}} \norm{v_h}_{H^1(B)} \norm{v}_{H^{s+1}(B)}.
\end{align*}
Finally, we have
\begin{align*}
\norm{v_h}_{H^1(B)}^2 &= (A_0\nabla v_h,\nabla v_h)_B + k^2 (v_h,v_h)_B \\
&= -((A_h - A_0)\nabla v_h,\nabla v_h)_B + (u,v_h)_{H^1(B)} \\
&\le \norm{A_h - A_0}_{(L^\infty(B))^{d\times d}} \norm{v_h}_{H^1(B)}^2 + \norm{u}_{H^1(B)} \norm{v_h}_{H^1(B)},
\end{align*}
where we applied the three-term H{\"o}lder's inequality with $p=\infty$ and $q=r=2$ to obtain the last inequality. Convergence of $\{A_h\}$ to $A_0$ in $(L^\infty(B))^{d\times d}$ implies that $\norm{v_h}_{H^1(B)} \le C \norm{u}_{H^1(B)}$ for sufficiently small $h$, from which we obtain \eqref{Adiff}. \qed

\end{proof}

\begin{remark} \label{remark:Adiff}

Since $\mathbb{A}_h^{-1} - \mathbb{A}_0^{-1}$ is a self-adjoint operator, the estimate \eqref{Adiff} also holds with the roles of $u$ and $v$ interchanged on the right-hand side, provided that each lies in the appropriate space.

\end{remark}

We now estimate the norm $\norm{T_h(\lambda) - T_0(\lambda)}$ for $\lambda\in U$.

\begin{lemma} \label{lemma:Th_norm}

If $A_h\to A_0$ in $W_\Sigma^{1,\infty}(B)$ and $n_h\to n_0$ in $L^2(B)$ as $h\to0$, then the sequence $\{T_h(\lambda)\}$ satisfies the norm estimate
\begin{equation} \label{Th_norm}
\norm{T_h(\lambda) - T_0(\lambda)} \le C_s \left( \norm{A_h - A_0}_{(L^{d/s}(B))^{d\times d}} + \norm{n_h - n_0}_{L^{p_0}(B)} \right),
\end{equation}
where for each $\epsilon>0$ we have
\begin{equation} \label{p0}
p_0 = \left\{\begin{array}{cl} 1+\epsilon &\text{ if } d=2, \\ \frac{3}{2} &\text{ if } d=3, \end{array} \right.
\end{equation}
and the constant $C_s$ is independent of $h$ and $\lambda\in U = \mathbb{C}\setminus\overline{B_R(0)}$ but depends on $\epsilon$ whenever $d=2$. As a result, we have $T_h(\lambda)\to T_0(\lambda)$ in norm as $h\to0$, uniformly for $\lambda\in U$.

\end{lemma}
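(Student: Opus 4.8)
The plan is to decompose $T_h(\lambda)-T_0(\lambda)$ according to which coefficient is being perturbed and to bound each piece in the operator norm on $H^1(B)$ by a constant that is independent of $\lambda\in U$. Using \eqref{Th} together with the identity $\mathbb{A}_h^{-1}\mathbb{K}_h-\mathbb{A}_0^{-1}\mathbb{K}_0 = \mathbb{A}_h^{-1}(\mathbb{K}_h-\mathbb{K}_0)+(\mathbb{A}_h^{-1}-\mathbb{A}_0^{-1})\mathbb{K}_0$, I would write
\begin{equation*}
T_h(\lambda)-T_0(\lambda) = -\lambda^{-1}\mathbb{A}_h^{-1}(\mathbb{K}_h-\mathbb{K}_0) - \lambda^{-1}(\mathbb{A}_h^{-1}-\mathbb{A}_0^{-1})\mathbb{K}_0 + (\mathbb{A}_h^{-1}-\mathbb{A}_0^{-1})\mathbb{B}.
\end{equation*}
Since $\abs{\lambda^{-1}}\le R^{-1}$ for all $\lambda\in U=\mathbb{C}\setminus\overline{B_R(0)}$, it then suffices to bound the three operators $\mathbb{A}_h^{-1}(\mathbb{K}_h-\mathbb{K}_0)$, $(\mathbb{A}_h^{-1}-\mathbb{A}_0^{-1})\mathbb{K}_0$, and $(\mathbb{A}_h^{-1}-\mathbb{A}_0^{-1})\mathbb{B}$ in the operator norm, with constants independent of $h$ for small $h$.

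For the two terms carrying $\mathbb{A}_h^{-1}-\mathbb{A}_0^{-1}$, the idea is to exploit extra regularity of $\mathbb{B}u$ and $\mathbb{K}_0 u$. Integration by parts in the defining identities shows that $w:=\mathbb{B}u$ solves \eqref{wh} with $h=0$, $f=0$, and $\xi=-S_\delta u$, while $w:=\mathbb{K}_0 u$ solves it with $h=0$, $f=k^2(n_0+1)u$, and $\xi=0$. Since the trace theorem and $\norm{S_\delta}\le1$ give $\norm{S_\delta u}_{H^{1/2}(\partial B)}\le C\norm{u}_{H^1(B)}$, and since $L^2(B)$ embeds continuously in $\tilde{H}^{s-1}(B)$ (so that $\norm{k^2(n_0+1)u}_{\tilde{H}^{s-1}(B)}\le C\norm{u}_{H^1(B)}$ because $n_0\in L^\infty(B)$), Proposition \ref{prop:A} applied with $h=0$ and $s\in(0,\tau)$ fixed as in Remark \ref{remark:A} yields $\mathbb{B}u,\mathbb{K}_0 u\in H^{s+1}(B)$ with $\norm{\mathbb{B}u}_{H^{s+1}(B)}+\norm{\mathbb{K}_0 u}_{H^{s+1}(B)}\le C_s\norm{u}_{H^1(B)}$. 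Applying Proposition \ref{prop:Adiff} in the form of Remark \ref{remark:Adiff}, with the $H^{s+1}(B)$-factor placed in the appropriate slot, gives for all $u,\phi\in H^1(B)$
\begin{equation*}
\abs{((\mathbb{A}_h^{-1}-\mathbb{A}_0^{-1})\mathbb{B}u,\phi)_{H^1(B)}}\le C_s\norm{A_h-A_0}_{(L^{d/s}(B))^{d\times d}}\norm{\mathbb{B}u}_{H^{s+1}(B)}\norm{\phi}_{H^1(B)},
\end{equation*}
and similarly with $\mathbb{B}$ replaced by $\mathbb{K}_0$; taking the supremum over $\phi\neq0$ and inserting the $H^{s+1}(B)$-bounds shows that both $(\mathbb{A}_h^{-1}-\mathbb{A}_0^{-1})\mathbb{B}$ and $(\mathbb{A}_h^{-1}-\mathbb{A}_0^{-1})\mathbb{K}_0$ have norm at most $C_s\norm{A_h-A_0}_{(L^{d/s}(B))^{d\times d}}$.

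For the first term, the uniform bound $\norm{\mathbb{A}_h^{-1}}\le C$ for small $h$ --- which follows exactly as in the proof of Proposition \ref{prop:Adiff} from $\mathbb{A}_0=I$ and $A_h\to A_0$ in $(L^\infty(B))^{d\times d}$ --- reduces the task to estimating $\norm{(\mathbb{K}_h-\mathbb{K}_0)u}_{H^1(B)}$. Writing $((\mathbb{K}_h-\mathbb{K}_0)u,\phi)_{H^1(B)}=-k^2((n_h-n_0)u,\phi)_B$ and applying the three-term H{\"o}lder's inequality (Theorem \ref{theorem:holder}) with $p_0^{-1}+q^{-1}+r^{-1}=1$ and $q=r$ chosen by the Sobolev embedding theorem (Theorem \ref{theorem:sobolev}) --- namely $q=r=6$ when $d=3$, forcing $p_0=3/2$, and $q=r=2(1+\epsilon)/\epsilon$ when $d=2$, forcing $p_0=1+\epsilon$ with a constant depending on $\epsilon$ --- gives $\norm{(\mathbb{K}_h-\mathbb{K}_0)u}_{H^1(B)}\le C_s\norm{n_h-n_0}_{L^{p_0}(B)}\norm{u}_{H^1(B)}$, hence the bound $C_s\norm{n_h-n_0}_{L^{p_0}(B)}$ for $\mathbb{A}_h^{-1}(\mathbb{K}_h-\mathbb{K}_0)$. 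Adding the three estimates and absorbing $R^{-1}$ into the constant yields \eqref{Th_norm} with $C_s$ independent of $h$ (for small $h$) and of $\lambda\in U$. Finally, $A_h\to A_0$ in $W_\Sigma^{1,\infty}(B)$ forces $A_h\to A_0$ in $(L^\infty(B))^{d\times d}$ and hence in $(L^{d/s}(B))^{d\times d}$ since $B$ is bounded, and $n_h\to n_0$ in $L^2(B)$ forces $n_h\to n_0$ in $L^{p_0}(B)$ (taking $\epsilon\le1$ when $d=2$, so that $p_0\le2$, and using that $B$ is bounded), so the right-hand side of \eqref{Th_norm} tends to $0$, which gives the asserted uniform norm convergence on $U$.

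The step I expect to require the most care is the regularity argument for $\mathbb{B}u$ and $\mathbb{K}_0 u$: one must recognize that these functions solve elliptic problems of precisely the form \eqref{wh} --- in particular that the Bessel potential $S_\delta$ merely improves the Neumann datum --- and then route the resulting $H^{s+1}(B)$-bound into the slot of \eqref{Adiff} (via Remark \ref{remark:Adiff}) that carries the $H^{s+1}(B)$-norm, since this is exactly what permits the weak norm $\norm{A_h-A_0}_{(L^{d/s}(B))^{d\times d}}$ in place of $\norm{A_h-A_0}_{(L^\infty(B))^{d\times d}}$. The term involving $\mathbb{K}_h-\mathbb{K}_0$ is routine once the H{\"o}lder and Sobolev exponents are matched, the only subtlety being the $d=2$ case, where one trades a larger integrability exponent for $u$ and $\phi$ against a smaller one for $n_h-n_0$.
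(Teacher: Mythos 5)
Your argument is correct and uses the same toolkit as the paper's proof: the regularity result of Proposition \ref{prop:A}, the weighted estimate of Proposition \ref{prop:Adiff} with Remark \ref{remark:Adiff}, and the same three-term H{\"o}lder/Sobolev pairing that yields the exponent $p_0$. The only genuine difference is the algebraic split. The paper writes $T_h(\lambda)-T_0(\lambda) = -(\mathbb{A}_h^{-1}-\mathbb{A}_0^{-1})(\lambda^{-1}\mathbb{K}_h-\mathbb{B}) - \lambda^{-1}\mathbb{A}_0^{-1}(\mathbb{K}_h-\mathbb{K}_0)$, so the function fed into Proposition \ref{prop:A} is $(\lambda^{-1}\mathbb{K}_h-\mathbb{B})u$, whose source term carries the perturbed coefficient $n_h$; this forces an extra duality argument bounding $\norm{n_h u}_{\tilde{H}^{s-1}(B)}$ by $C_s\norm{n_h}_{L^{3/(2-s)}(B)}\norm{u}_{H^1(B)}$ and uses the $L^2(B)$ convergence of $\{n_h\}$ already inside the estimate to get uniform boundedness in $L^{3/(2-s)}(B)$. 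Your split instead places only the fixed operators $\mathbb{K}_0$ and $\mathbb{B}$ next to $\mathbb{A}_h^{-1}-\mathbb{A}_0^{-1}$, so the $H^{s+1}(B)$ bounds follow at once from $n_0\in L^\infty(B)$ and the trace theorem (your identification of $\mathbb{B}u$ and $\mathbb{K}_0 u$ as solutions of \eqref{wh} with $h=0$ is exactly right), at the modest price of needing the uniform bound $\norm{\mathbb{A}_h^{-1}}\le C$ for small $h$, which, as you note, is contained in the proof of Proposition \ref{prop:Adiff}; in your version the hypothesis $n_h\to n_0$ in $L^2(B)$ is needed only at the end to send $\norm{n_h-n_0}_{L^{p_0}(B)}\to0$. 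Both routes deliver \eqref{Th_norm} with the same exponents, the same $\epsilon$-dependence for $d=2$, and the same uniformity in $\lambda\in U$.
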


\begin{proof} Let $\lambda\in U$. We first observe that we may write
\begin{equation*}
T_h(\lambda) - T_0(\lambda) = -(\mathbb{A}_h^{-1} - \mathbb{A}_0^{-1})(\lambda^{-1}\mathbb{K}_h - \mathbb{B}) - \lambda^{-1}\mathbb{A}_0^{-1}(\mathbb{K}_h - \mathbb{K}_0),
\end{equation*}
immediately providing the estimate
\begin{align*}
\abs{((T_h(\lambda) - T_0(\lambda))u,u')_{H^1(B)}} &\le \abs{((\mathbb{A}_h^{-1} - \mathbb{A}_0^{-1})(\lambda^{-1}\mathbb{K}_h - \mathbb{B})u,u')_{H^1(B)}} \\
&\hspace{4em} + \abs{\lambda}^{-1} \abs{(\mathbb{A}_0^{-1}(\mathbb{K}_h - \mathbb{K}_0)u,u')_{H^1(B)}}
\end{align*}
for all $u,u'\in H^1(B)$. In order to estimate the first term, we notice that $(\lambda^{-1}\mathbb{K}_h - \mathbb{B})u$ satisfies \eqref{wh} with $f = \lambda^{-1} k^2(n_h+1)u\in L^2(B)$ and $\xi = S_\delta u\in H^{1/2}(\partial B)$, from which Proposition \ref{prop:A} implies that $(\lambda^{-1}\mathbb{K}_h - \mathbb{B})u\in H^{s+1}(B)$ with the estimate
\begin{align*}
\norm{(\lambda^{-1}\mathbb{K}_h - \mathbb{B})u}_{H^{s+1}(B)} &\le C_s \left( \norm{\lambda^{-1} k^2(n_h+1)u}_{\tilde{H}^{s-1}(B)} + \norm{S_\delta u}_{H^{1/2}(\partial B)} \right) \\
&\le C_s \left( \norm{n_h u}_{\tilde{H}^{s-1}(B)} + \norm{u}_{H^1(B)} \right)
\end{align*}
with $C_s$ independent of $\lambda\in U$. We apply a duality argument and the three-term H{\"o}lder's inequality to estimate $\norm{n_h u}_{\tilde{H}^{s-1}(B)}$. By definition of the dual norm $\norm{\cdot}_{\tilde{H}^{s-1}(B)}$ we have
\begin{equation*}
\norm{n_h u}_{\tilde{H}^{s-1}(B)} = \sup_{\norm{\psi}_{H^{1-s}(B)} = 1} \abs{ (n_h u,\psi)_B },
\end{equation*}
where we interpret the duality pairing in terms of the Gelfand triple $H^{1-s}(B) \subseteq L^2(B) \subseteq \tilde{H}^{s-1}(B)$. Since $u\in H^1(B)$ and $\psi\in H^{1-s}(B)$, the Sobolev embedding theorem implies that $u\in L^q(B)$ for all $q\in[2,\infty)$ and $\psi\in L^{2/s}(B)$ when $d=2$ and that $u\in L^6(B)$ and $\psi\in L^{6/(1+2s)}(B)$ when $d=3$, each with a continuous embedding. For $d=2$, applying the three-term H{\"o}lder's inequality with $p = \frac{3}{2-s}$, $q = \frac{6}{2-s}$, and $r = \frac{2}{s}$ yields
\begin{align*}
\abs{ (n_h u,\psi)_{\tilde{H}^{s-1}(B)} } &\le \norm{n_h}_{L^{3/(2-s)}(B)} \norm{u}_{L^{6/(2-s)}(B)} \norm{\psi}_{L^{2/s}(B)} \\
&\le C_s \norm{n_h}_{L^{3/(2-s)}(B)} \norm{u}_{H^1(B)} \norm{\psi}_{H^{1-s}(B)}.
\end{align*}
For $d=3$, we apply the three-term H{\"o}lder's inequality with $p = \frac{3}{2-s}$, $q = 6$, and $r = \frac{6}{1+2s}$ to obtain
\begin{align*}
\abs{ (n_h u,\psi)_{\tilde{H}^{s-1}(B)} } &\le \norm{n_h}_{L^{3/(2-s)}(B)} \norm{u}_{L^6(B)} \norm{\psi}_{L^{6/(1+2s)}(B)} \\
&\le C_s \norm{n_h}_{L^{3/(2-s)}(B)} \norm{u}_{H^1(B)} \norm{\psi}_{H^{1-s}(B)}.
\end{align*}
Thus, in either case we have $\norm{n_h u}_{\tilde{H}^{s-1}(B)} \le C_s \norm{n_h}_{L^{3/(2-s)}(B)} \norm{u}_{H^1(B)}$. Convergence of $\{n_h\}$ to $n_0$ in $L^2(B)$ implies uniform boundedness of $\{n_h\}$ in the weaker $L^{3/(2-s)}(B)$-norm, and hence there exists a constant $C_s$ independent of $h$ for which
\begin{equation*}
\norm{(\lambda^{-1}\mathbb{K}_h - \mathbb{B})u}_{H^{s+1}(B)} \le C_s \norm{u}_{H^1(B)}.
\end{equation*}
We now proceed to the main estimate. Applying the estimate from Proposition \ref{prop:Adiff} (with the functions reversed as in Remark \ref{remark:Adiff}) yields
\begin{align*}
&\abs{((\mathbb{A}_h^{-1} - \mathbb{A}_0^{-1})(\lambda^{-1}\mathbb{K}_h - \mathbb{B})u,u')_{H^1(B)}} \\
&\hspace{4em}\le C_s \norm{A_h - A_0}_{(L^{d/s}(B))^{d\times d}} \norm{(\lambda^{-1}\mathbb{K}_h - \mathbb{B})u}_{H^{s+1}(B)} \norm{u'}_{H^1(B)} \\
&\hspace{4em} \le C_s \norm{A_h - A_0}_{(L^{d/s}(B))^{d\times d}} \norm{u}_{H^1(B)} \norm{u'}_{H^1(B)}.
\end{align*}
For the second term we recall that $\mathbb{A}_0$ is equal to the identity operator, which allows us to write
\begin{equation*}
\abs{(\mathbb{A}_0^{-1}(\mathbb{K}_h - \mathbb{K}_0)u,u')_{H^1(B)}} = k^2\abs{((n_h - n_0)u,u')}_B.
\end{equation*}
Since $u,u'\in H^1(B)$, the Sobolev embedding theorem implies that $u,u'\in L^q(B)$, where $q\in[2,\infty)$ for $d=2$ and $q = 6$ for $d=3$. In the case $d=2$, we apply the three-term H{\"o}lder's inequality for a given $\epsilon>0$ with $p = 1+\epsilon$ and $q = r = \frac{2(1+\epsilon)}{\epsilon}$, and in the case $d=3$ we apply the inequality with $p = \frac{3}{2}$ and $q=r=6$ to obtain
\begin{align*}
\abs{((n_h-n_0)u,u')_B} &\le \norm{n_h - n_0}_{L^{p_0}(B)} \norm{u}_{L^q(B)} \norm{u'}_{L^r(B)} \\
&\le C \norm{n_h - n_0}_{L^{p_0}(B)} \norm{u}_{H^1(B)} \norm{u'}_{H^1(B)},
\end{align*}
where $p_0$ is given by \eqref{p0}. Combining these estimates and noting that all constants are independent of $h$ and $\lambda\in U$, the estimate \eqref{Th_norm} follows from the definition of the operator norm. Moreover, convergence of $\{A_h\}$ to $A_0$ and of $\{n_h\}$ to $n_0$ in the spaces $W_\Sigma^{1,\infty}(B)$ and $L^2(B)$, respectively, implies that the right-hand side of \eqref{Th_norm} converges to $0$ as $h\to0$, proving the final assertion. \qed

\end{proof}

With the results of Lemmas \ref{lemma:Th_basic} and \ref{lemma:Th_norm} in hand, we proceed to apply Theorem \ref{theorem:moskow} to the sequence $\{T_h(\lambda)\}$.

\begin{theorem} \label{theorem:main}

Suppose that $A_h\to A_0$ in $W_\Sigma^{1,\infty}(B)$ and that $n_h\to n_0$ in $L^2(B)$ as $h\to0$, and suppose that there exist no nontrivial solutions of \eqref{dir} for $(A,n) = (A_0,n_0)$. Let $\lambda_0$ be a nonzero simple $\delta$-Stekloff eigenvalue for $(A,n) = (A_0,n_0)$ with $H^1(B)$-normalized eigenfunction $u_0$, choose $R>0$ such that $R<\abs{\lambda_0}$, and let $U = \mathbb{C}\setminus\overline{B_R(0)}$. For sufficiently small $h>0$ there exists a simple $\delta$-Stekloff eigenvalue $\lambda_h$ for $(A,n) = (A_h,n_h)$ that satisfies the formula
\begin{align}
\begin{split} \label{Th_main}
\lambda_h = \lambda_0 &+ \frac{-((A_h-A_0)\nabla u_0,\nabla u_0)_B + k^2 ((n_h-n_0)u_0,u_0)_B}{\inner{S_\delta u_0}{u_0}_{\partial B}} \\
&\hspace{3em} + O\left( \norm{A_h-A_0}_{(L^{d/s}(B))^{d\times d}}^2 + \norm{n_h-n_0}_{L^{p'}(B)}^2 \right),
\end{split}
\end{align}
where the exponent $p'$ is given by
\begin{equation} \label{pprime}
p' = \left\{ \begin{array}{cl} 1+\epsilon &\text{ if } d=2, \\ \frac{3}{2+s} &\text{ if } d=3, \end{array} \right.
\end{equation}
for a given $\epsilon>0$.

\end{theorem}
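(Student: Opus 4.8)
The plan is to verify the hypotheses of Theorem~\ref{theorem:moskow} for the family $\{T_h(\lambda)\}$ defined in \eqref{Th} and then to evaluate the resulting correction formula explicitly in terms of the coefficients. Compactness of $T_h(\lambda)$ for $\lambda\in U$, analyticity of $\lambda\mapsto T_h(\lambda)$ on $U$, and the uniform-in-$\lambda$ norm convergence $T_h(\lambda)\to T_0(\lambda)$ are supplied by Lemmas~\ref{lemma:Th_basic} and \ref{lemma:Th_norm}, while the fact that $\lambda_0$ is a nonzero simple nonlinear eigenvalue of $T_0$ follows from the equivalence of \eqref{deltastek}, \eqref{dsop}, and \eqref{nldsop} together with the assumed simplicity of $\lambda_0$. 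Since $\mathbb{A}_0$ is the identity we have $T_0(\lambda) = -\lambda^{-1}\mathbb{K}_0 + \mathbb{B}$ and hence $DT_0(\lambda_0) = \lambda_0^{-2}\mathbb{K}_0$; pairing the eigenrelation $-\mathbb{K}_0 u_0 + \lambda_0\mathbb{B}u_0 = u_0$ with $u_0$ and using $\norm{u_0}_{H^1(B)} = 1$ gives $1 + \lambda_0^2(DT_0(\lambda_0)u_0,u_0)_{H^1(B)} = -\lambda_0\inner{S_\delta u_0}{u_0}_{\partial B}$. This quantity is nonzero: $\lambda_0\neq0$, and $\inner{S_\delta u_0}{u_0}_{\partial B} = \sum_m(1+\mu_m)^{-\delta}\abs{\inner{u_0}{Y_m}_{\partial B}}^2$ can vanish only if $u_0|_{\partial B} = 0$, which would make $u_0$ a nontrivial solution of \eqref{dir} for $(A_0,n_0)$, contrary to hypothesis. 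This simultaneously verifies the nondegeneracy condition of Theorem~\ref{theorem:moskow} and identifies $\inner{S_\delta u_0}{u_0}_{\partial B}$ as the denominator appearing in \eqref{Th_main}.

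For the numerator I would use the factorization $T_h(\lambda) = -\mathbb{A}_h^{-1}(\lambda^{-1}\mathbb{K}_h - \mathbb{B})$. Writing $w_h := (\lambda_0^{-1}\mathbb{K}_h - \mathbb{B})u_0$ and $v_h := \mathbb{A}_h^{-1}w_h = -T_h(\lambda_0)u_0$, the eigenrelation gives $w_0 = v_0 = -\lambda_0^{-1}u_0$, and one decomposes
\begin{equation*}
(T_0(\lambda_0) - T_h(\lambda_0))u_0 = (\mathbb{A}_h^{-1} - \mathbb{A}_0^{-1})w_h + \lambda_0^{-1}(\mathbb{K}_h - \mathbb{K}_0)u_0.
\end{equation*}
Pairing with $u_0$ and invoking the identity $((\mathbb{A}_h^{-1} - \mathbb{A}_0^{-1})w_h, u_0)_{H^1(B)} = -((A_h-A_0)\nabla v_h,\nabla u_0)_B$ from the proof of Proposition~\ref{prop:Adiff} together with the definition of $\mathbb{K}_h$, and then substituting $v_h = -\lambda_0^{-1}u_0 + (v_h - v_0)$, one obtains after multiplication by $\lambda_0^2$ and division by $-\lambda_0\inner{S_\delta u_0}{u_0}_{\partial B}$ that the correction term equals the displayed first-order quotient in \eqref{Th_main} plus the remainder $\lambda_0\inner{S_\delta u_0}{u_0}_{\partial B}^{-1}((A_h-A_0)\nabla(v_h-v_0),\nabla u_0)_B$.

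It remains to show that this remainder and the $O$-term of Theorem~\ref{theorem:moskow} are both $O(\norm{A_h-A_0}_{(L^{d/s}(B))^{d\times d}}^2 + \norm{n_h-n_0}_{L^{p'}(B)}^2)$. Two regularity facts are needed. First, $u_0$ solves a problem of the form \eqref{wh} with $f = -k^2(n_0+1)u_0\in L^2(B)$ and $\xi = -\lambda_0 S_\delta u_0\in H^{1/2}(\partial B)$, so Proposition~\ref{prop:A} gives $u_0\in H^{s+1}(B)$ and hence $\nabla u_0\in(L^{2d/(d-2s)}(B))^d$ by the Sobolev embedding theorem. Second, the eigenfunction $u_0^*$ spanning $R(E_{\lambda_0}^*)$ solves the adjoint $\delta$-Stekloff problem, i.e.\ \eqref{deltastek} with $(A_0,n_0,\lambda_0)$ replaced by $(A_0,\overline{n_0},\overline{\lambda_0})$, so Proposition~\ref{prop:A} likewise gives $u_0^*\in H^{s+1}(B)$. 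For the remainder, writing $v_h - v_0 = -(T_h(\lambda_0) - T_0(\lambda_0))u_0$ and applying the three-term H{\"o}lder inequality with the exponents of Proposition~\ref{prop:Adiff} bounds it by $C_s\norm{A_h-A_0}_{(L^{d/s}(B))^{d\times d}}\norm{(T_h(\lambda_0)-T_0(\lambda_0))u_0}_{H^1(B)}\norm{u_0}_{H^{s+1}(B)}$. For the $O$-term, since $R(E_{\lambda_0})$ and $R(E_{\lambda_0}^*)$ are spanned by $u_0$ and $u_0^*$, it equals (up to constants) the product $\sup_{\lambda\in U}\norm{(T_h(\lambda)-T_0(\lambda))u_0}_{H^1(B)}\cdot\sup_{\lambda\in U}\norm{(T_h^*(\lambda)-T_0^*(\lambda))u_0^*}_{H^1(B)}$. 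The key point is that each factor admits a sharper bound than the generic operator norm of Lemma~\ref{lemma:Th_norm}, because now the argument ($u_0$ or $u_0^*$) lies in $H^{s+1}(B)$: repeating the estimates of that lemma but using the embedding $H^{s+1}(B)\hookrightarrow L^{6/(1-2s)}(B)$ for $d=3$ (and $H^{s+1}(B)\hookrightarrow C_b(B)$ for $d=2$) in the three-term H{\"o}lder inequality replaces the exponent $p_0$ of \eqref{p0} by $p'$ of \eqref{pprime}, so each factor is $O(\norm{A_h-A_0}_{(L^{d/s}(B))^{d\times d}} + \norm{n_h-n_0}_{L^{p'}(B)})$. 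Multiplying out and using Young's inequality, both the remainder and the $O$-term have the stated quadratic bound, completing the proof.

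The main obstacle is this last step---upgrading the exponent from $p_0$ to $p'$. It requires not only the regularity $u_0\in H^{s+1}(B)$ (which rests on Assumption~\ref{assumption:A} through Proposition~\ref{prop:A}) but also the identification of $R(E_{\lambda_0}^*)$ with an eigenfunction of a genuine $\delta$-Stekloff problem, so that the same regularity theory applies on the adjoint side and both factors in the $O$-term enjoy the improved estimate; one must then carefully track which H{\"o}lder exponents are admissible when one argument is the more regular eigenfunction and the other is a generic $H^1(B)$ element. The remaining computations are routine bookkeeping combining the identities for $\mathbb{A}_h^{-1} - \mathbb{A}_0^{-1}$ and $\mathbb{K}_h - \mathbb{K}_0$ with the three-term H{\"o}lder inequality.
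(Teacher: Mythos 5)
Your proposal is correct and follows essentially the same route as the paper: verify the hypotheses of Theorem~\ref{theorem:moskow} via Lemmas~\ref{lemma:Th_basic} and \ref{lemma:Th_norm}, compute $1+\lambda_0^2(DT_0(\lambda_0)u_0,u_0)_{H^1(B)}=-\lambda_0\inner{S_\delta u_0}{u_0}_{\partial B}$, identify the first-order terms, and control the remainder by exploiting the $H^{s+1}(B)$-regularity (Proposition~\ref{prop:A}) of both $u_0$ and the adjoint eigenfunction spanning $R(E_{\lambda_0}^*)$ to upgrade the H{\"o}lder exponent from $p_0$ to $p'$. The only difference is bookkeeping in the numerator: you substitute $v_h=v_0+(v_h-v_0)$ with $v_h-v_0=-(T_h(\lambda_0)-T_0(\lambda_0))u_0$ and reuse the restricted-range estimate, whereas the paper expands $\mathbb{A}_h^{-1}-\mathbb{A}_0^{-1}$ via a resolvent-type identity into the four terms \eqref{Tdiff1}--\eqref{Tdiff4} and absorbs the last two using Proposition~\ref{prop:Adiff}; both yield the same leading terms and the same quadratic remainder.
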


\begin{proof} We begin with the observation that Lemmas \ref{lemma:Th_basic} and \ref{lemma:Th_norm} verify that the hypotheses of Theorem \ref{theorem:moskow} are satisfied by the sequence $\{T_h(\lambda)\}$ for $U = \mathbb{C}\setminus\overline{B_R(0)}$ with $R<\abs{\lambda_0}$. Moreover, from the assumption that $u_0$ is an $H^1(B)$-normalized eigenfunction corresponding to $\lambda_0$, we obtain
\begin{align*}
1+\lambda_0^2 (DT_0(\lambda_0) u_0,u_0)_{H^1(B)} &= 1 + \lambda_0^2(\lambda_0^{-2}\mathbb{A}_0^{-1}\mathbb{K}_0 u_0,u_0)_{H^1(B)} \\
&= 1 - (\lambda_0 T_0(\lambda_0) u_0,u_0)_{H^1(B)} + \lambda_0(\mathbb{B} u_0,u_0)_{H^1(B)} \\
&= 1 - (u_0,u_0)_{H^1(B)} - \lambda_0\inner{S_\delta u_0}{u_0}_{\partial B} \\
&= - \lambda_0\inner{S_\delta u_0}{u_0}_{\partial B}.
\end{align*}
Since $\lambda_0\neq0$ and $u_0\neq0$ by assumption (for otherwise $u_0$ would be a nontrivial solution of \eqref{dir} for $(A,n) = (A_0,n_0)$), it follows that $\lambda_0\inner{S_\delta u_0}{u_0}_{\partial B} \neq 0$, and by Theorem \ref{theorem:moskow} there exists a simple $\delta$-Stekloff eigenvalue $\lambda_h$ for $(A,n) = (A_h,n_h)$ such that
\begin{align*}
\lambda_h = \lambda_0 &+ \frac{\lambda_0^2 ((T_0(\lambda_0) - T_h(\lambda_0))u_0,u_0)_{H^1(B)}}{1 + \lambda_0^2(DT_0(\lambda_0)u_0,u_0)_{H^1(B)}} \\
&+ O\left( \sup_{\lambda\in U} \norm{(T_h(\lambda) - T_0(\lambda))|_{R(E_{\lambda_0})}} \norm{(T_h^*(\lambda) - T_0^*(\lambda))|_{R(E_{\lambda_0}^*)}} \right).
\end{align*}
We first estimate the remainder term
\begin{equation*}
\mathcal{R}_h = \sup_{\lambda\in U} \norm{(T_h(\lambda) - T_0(\lambda))|_{R(E_{\lambda_0})}} \norm{(T_h^*(\lambda) - T_0^*(\lambda))|_{R(E_{\lambda_0}^*)}}.
\end{equation*}
For the leftmost norm, we may begin as in the proof of Lemma \ref{lemma:Th_norm} with the estimate
\begin{align*}
\abs{((T_h(\lambda) - T_0(\lambda))u,u')_{H^1(B)}} &\le \abs{((\mathbb{A}_h^{-1} - \mathbb{A}_0^{-1})(\lambda^{-1}\mathbb{K}_h - \mathbb{B})u,u')_{H^1(B)}} \\
&\hspace{4em} + \abs{\lambda}^{-1} \abs{(\mathbb{A}_0^{-1}(\mathbb{K}_h - \mathbb{K}_0)u,u')_{H^1(B)}}
\end{align*}
for $u\in R(E_{\lambda_0})$ and $u'\in H^1(B)$. The first term may be estimated as in that proof to obtain
\begin{align*}
&\abs{((\mathbb{A}_h^{-1} - \mathbb{A}_0^{-1})(\lambda^{-1}\mathbb{K}_h - \mathbb{B})u,u')_{H^1(B)}} \\
&\hspace{4em} \le C_s \norm{A_h - A_0}_{(L^{d/s}(B))^{d\times d}} \norm{u}_{H^1(B)} \norm{u'}_{H^1(B)}.
\end{align*}
We may proceed as before to estimate the second term, but Proposition \ref{prop:A} now implies that $u\in H^{s+1}(B)$ with the estimate
\begin{equation} \label{u_est}
\norm{u}_{H^{s+1}(B)} \le C_s \norm{u}_{H^1(B)},
\end{equation}
and we may take advantage of this higher regularity. For $d=2$ the Sobolev embedding theorem implies that $u\in C_b(B)$ and $u'\in L^r(B)$ for every $r\in[2,\infty)$, and applying the three-term H{\"o}lder's inequality with $p = \frac{r}{r-1}$, $q=\infty$, and any $r\in[2,\infty)$ yields
\begin{align*}
&\abs{\lambda^{-1}(\mathbb{A}_0^{-1}(\mathbb{K}_h - \mathbb{K}_0)u,u')_{H^1(B)}} \\
&\hspace{6em} = k^2\abs{\lambda}^{-1} \abs{((n_h-n_0)u,u')_B} \\
&\hspace{6em} \le k^2 R^{-1} \norm{n_h-n_0}_{L^{r/(r-1)}(B)} \norm{u}_{C_b(B)} \norm{u'}_{L^r(B)} \\
&\hspace{6em} \le C_{s,\epsilon} \norm{n_h-n_0}_{L^{1+\epsilon}(B)} \norm{u}_{H^1(B)} \norm{u'}_{H^1(B)},
\end{align*}
where in the last inequality we chose $r = \frac{1+\epsilon}{\epsilon}$ for a given $\epsilon>0$ and applied the Sobolev embedding estimates along with \eqref{u_est}. For $d=3$ we have $u\in L^{6/(1-2s)}(B)$ and $u'\in L^6(B)$, and by applying the three-term H{\"o}lder's inequality with $p = \frac{3}{2+s}$, $q = \frac{6}{1-2s}$, and $r = 6$ we obtain
\begin{align*}
&\abs{\lambda^{-1}(\mathbb{A}_0^{-1}(\mathbb{K}_h - \mathbb{K}_0)u,u')_{H^1(B)}} \\
&\hspace{6em} = k^2\abs{\lambda}^{-1} \abs{((n_h-n_0)u,u')_B} \\
&\hspace{6em} \le k^2 R^{-1} \norm{n_h-n_0}_{L^{3/(2+s)}(B)} \norm{u}_{L^{6/(1-2s)}(B)} \norm{u'}_{L^6(B)} \\
&\hspace{6em} \le C_{s,\epsilon} \norm{n_h-n_0}_{L^{3/(2+s)}(B)} \norm{u}_{H^1(B)} \norm{u'}_{H^1(B)},
\end{align*}
By combining these estimates along with the definition of the operator norm, we arrive at the estimate
\begin{align}
\begin{split} \label{Th1}
&\norm{(T_h(\lambda) - T_0(\lambda))|_{R(E_{\lambda_0})}} \\
&\hspace{2em} \le C_s\biggr( \norm{A_h - A_0}_{(L^{d/s}(B))^{d\times d}} + \norm{n_h-n_0}_{L^{p'}(B)} \biggr),
\end{split}
\end{align}
where $p'$ is defined as in \eqref{pprime}. The constant $C_s$ is uniform with respect to $h$ and $\lambda$, but it depends on $\epsilon$ in the case $d=2$. We now estimate the rightmost norm in the remainder term $\mathcal{R}_h$, which is in terms of the sequence $\{T_h^*(\lambda)\}$ of adjoint operators. Since the operators $\mathbb{A}_h$ and $\mathbb{B}$ are self-adjoint, we may write the operator $T_h^*(\lambda) - T_0^*(\lambda)$ in the form
\begin{equation*}
T_h^*(\lambda) - T_0^*(\lambda) = -(\overline{\lambda}^{-1} \mathbb{K}_h^* - \mathbb{B}) (\mathbb{A}_h^{-1} - \mathbb{A}_0^{-1}) - \overline{\lambda}^{-1} (\mathbb{K}_h^* - \mathbb{K}_0^*)\mathbb{A}_0^{-1}.
\end{equation*}
For the first term, we observe that
\begin{align*}
&\abs{((\overline{\lambda}^{-1} \mathbb{K}_h^* - \mathbb{B}) (\mathbb{A}_h^{-1} - \mathbb{A}_0^{-1})u^*,u')_{H^1(B)}} \\
&\hspace{8em} = \abs{((\mathbb{A}_h^{-1} - \mathbb{A}_0^{-1})u^*,(\lambda^{-1} \mathbb{K}_h - \mathbb{B})u')_{H^1(B)}}
\end{align*}
for $u^*\in R(E_{\lambda_0})$ and $u'\in H^1(B)$. Recalling from the proof Lemma \ref{lemma:Th_norm} that $(\lambda^{-1} \mathbb{K}_h - \mathbb{B})$ maps $H^1(B)$ continuously into $H^{s+1}(B)$, uniformly for sufficiently small $h$, we apply Proposition \ref{prop:Adiff} to obtain
\begin{align*}
&\abs{((\overline{\lambda}^{-1} \mathbb{K}_h^* - \mathbb{B}) (\mathbb{A}_h^{-1} - \mathbb{A}_0^{-1})u^*,u')_{H^1(B)}} \\
&\hspace{6em} \le C_s \norm{A_h - A_0}_{(L^{d/s}(B))^{d\times d}} \norm{u^*}_{H^1(B)} \norm{u'}_{H^1(B)}.
\end{align*}
For the second term, we note that $u^*\in R(E_{\lambda_0}^*)$ satisfies \eqref{deltastek} with $(A,n) = (A_0,\overline{n_0})$ and $\lambda = \overline{\lambda_0}$, from which Proposition \ref{prop:A} implies that $u^*\in H^{s+1}(B)$ with the estimate $\norm{u^*}_{H^{s+1}(B)} \le C_s \norm{u^*}_{H^1(B)}$. Observing that
\begin{equation*}
((\mathbb{K}_h^* - \mathbb{K}_0^*)\mathbb{A}_0^{-1}u^*,u')_{H^1(B)} = -k^2((\overline{n_h} - \overline{n_0})u^*,u')_{H^1(B)},
\end{equation*}
we may apply the three-term H{\"o}lder's inequality to the second term just as we did when estimating the leftmost term in $\mathcal{R}_h$ to obtain
\begin{equation*}
\abs{(\overline{\lambda}^{-1} (\mathbb{K}_h^* - \mathbb{K}_0^*)\mathbb{A}_0^{-1}u^*,u')_{H^1(B)}} \le C_s \norm{n_h - n_0}_{L^{p'}(B)} \norm{u^*}_{H^1(B)} \norm{u'}_{H^1(B)}.
\end{equation*}
By combining these estimates, the definition of the operator norm yields
\begin{align}
\begin{split} \label{Th2}
&\norm{(T_h^*(\lambda) - T_0^*(\lambda))|_{R(E_{\lambda_0}^*)}} \\
&\hspace{2em} \le C_s\biggr( \norm{A_h - A_0}_{(L^{d/s}(B))^{d\times d}} + \norm{n_h-n_0}_{L^{p'}(B)} \biggr),
\end{split}
\end{align}
where $p'$ is defined as in \eqref{pprime}, and the constant $C_s$ is uniform with respect to $h$ and $\lambda$ but depends on $\epsilon$ in the case $d=2$. Noticing that the right-hand sides of \eqref{Th1} and \eqref{Th2} coincide, we arrive at the result
\begin{equation} \label{remainder}
\mathcal{R}_h = O\left( \norm{A_h-A_0}_{(L^{d/s}(B))^{d\times d}}^2 + \norm{n_h-n_0}_{L^{p'}(B)}^2 \right).
\end{equation}
In order to evaluate the correction term, we write the numerator in the form
\begin{subequations} \label{Tdiff}
\begin{align}
&((T_0(\lambda_0) - T_h(\lambda_0))u_0,u_0)_{H^1(B)} \nonumber\\
&\hspace{4em} = -((\mathbb{A}_h - \mathbb{A}_0)(\lambda_0^{-1}\mathbb{K}_0 - \mathbb{B})u_0,u_0)_{H^1(B)} \label{Tdiff1} \\
&\hspace{6em} + \lambda_0^{-1}((\mathbb{K}_h - \mathbb{K}_0)u_0,u_0)_{H^1(B)} \label{Tdiff2} \\
&\hspace{6em} - ((\mathbb{A}_h^{-1} - \mathbb{A}_0^{-1})(\mathbb{A}_h - \mathbb{A}_0)(\lambda_0^{-1}\mathbb{K}_0 - \mathbb{B})u_0,u_0)_{H^1(B)} \label{Tdiff3} \\
&\hspace{6em} + \lambda_0^{-1}((\mathbb{A}_h^{-1} - \mathbb{A}_0^{-1})(\mathbb{K}_h - \mathbb{K}_0)u_0,u_0)_{H^1(B)}, \label{Tdiff4}
\end{align}
\end{subequations}
where we have used the identity
\begin{equation*}
\mathbb{A}_h^{-1} - \mathbb{A}_0^{-1} = -\mathbb{A}_0^{-1}(\mathbb{A}_h - \mathbb{A}_0)\mathbb{A}_0^{-1} - (\mathbb{A}_h^{-1} - \mathbb{A}_0^{-1})(\mathbb{A}_h - \mathbb{A}_0)\mathbb{A}_0^{-1}
\end{equation*}
to force all terms involving $\mathbb{A}_h^{-1} - \mathbb{A}_0^{-1}$ to be sufficiently small that they may be absorbed into the remainder term, as we shall show. We note that we also used the fact that $\mathbb{A}_0^{-1}$ is the identity operator to ignore it anytime it is composed with another operator; however, we have left it in place when appearing in a difference of operators in order to clearly see the influence of the perturbation. We first use Proposition \ref{prop:Adiff} to show that \eqref{Tdiff3} and \eqref{Tdiff4} may be absorbed into the remainder term $\mathcal{R}_h$. Indeed, since $u_0$ is an eigenfunction corresponding to $\lambda_0$, we have $(\lambda_0^{-1} \mathbb{K}_0 - \mathbb{B})u_0 = -\lambda_0^{-1} u_0$ and $u_0\in H^{s+1}(B)$ by Proposition \ref{prop:A}. Thus, Proposition \ref{prop:Adiff} asserts that
\begin{align*}
&\abs{((\mathbb{A}_h^{-1} - \mathbb{A}_0^{-1})(\mathbb{A}_h - \mathbb{A}_0)(\lambda_0^{-1}\mathbb{K}_0 - \mathbb{B})u_0,u_0)_{H^1(B)}} \\
&\hspace{6em} = \abs{\lambda_0}^{-1} \abs{((\mathbb{A}_h^{-1} - \mathbb{A}_0^{-1})(\mathbb{A}_h - \mathbb{A}_0)u_0,u_0)_{H^1(B)}} \\
&\hspace{6em} \le C_s \norm{A_h - A_0}_{(L^{d/s}(B))^{d\times d}} \norm{(\mathbb{A}_h - \mathbb{A}_0)u_0}_{H^1(B)} \norm{u_0}_{H^{s+1}(B)}.
\end{align*}
We observe that $\norm{u_0}_{H^{s+1}(B)} \le C_s \norm{u_0}_{H^1(B)} = C_s$ by Proposition \ref{prop:A} and that
\begin{align*}
\norm{(\mathbb{A}_h - \mathbb{A}_0)u_0}_{H^1(B)} &= \sup_{\norm{\psi}_{H^1(B)}=1} \abs{((A_h - A_0)\nabla u_0,\nabla\psi)_B} \\
&\le C_s \norm{A_h - A_0}_{(L^{d/s}(B))^{d\times d}},
\end{align*}
where we have applied the three-term H{\"o}lder's inequality with $p = \frac{d}{s}$, $q = \frac{2d}{d-2s}$, and $r = 2$ in a similar manner to the proof of Proposition \ref{prop:Adiff}. It follows that \eqref{Tdiff3} satisfies the estimate
\begin{equation*}
\abs{((\mathbb{A}_h^{-1} - \mathbb{A}_0^{-1})(\mathbb{A}_h - \mathbb{A}_0)(\lambda_0^{-1}\mathbb{K}_0 - \mathbb{B})u_0,u_0)_{H^1(B)}} \le C_s \norm{A_h - A_0}_{(L^{d/s}(B))^{d\times d}}^2
\end{equation*}
and may be absorbed into the remainder term $\mathcal{R}_h$. The estimate of \eqref{Tdiff4} follows similarly, as from Proposition \ref{prop:Adiff} we have
\begin{align*}
&\abs{\lambda_0^{-1}((\mathbb{A}_h^{-1} - \mathbb{A}_0^{-1})(\mathbb{K}_h - \mathbb{K}_0)u_0,u_0)_{H^1(B)}} \\
&\hspace{4em} \le C_s \norm{A_h - A_0}_{(L^{d/s}(B))^{d\times d}} \norm{(\mathbb{K}_h - \mathbb{K}_0)u_0}_{H^1(B)} \norm{u_0}_{H^{s+1}(B)}.
\end{align*}
As with the previous estimate, the bound $\norm{u_0}_{H^{s+1}(B)} \le C_s$ implies that
\begin{align*}
\norm{(\mathbb{K}_h - \mathbb{K}_0)u_0}_{H^1(B)} &= \sup_{\norm{\psi}_{H^1(B)}=1} \abs{((n_h-n_0)u_0,\psi)_{H^1(B)}} \\
&\le C_s \norm{n_h - n_0}_{L^{p'}(B)},
\end{align*}
where we have applied the three-term H{\"o}lder's inequality as we did in the estimate of the remainder $\mathcal{R}_h$. From the arithmetic-geometric mean inequality we obtain
\begin{align*}
&\abs{\lambda_0^{-1}((\mathbb{A}_h^{-1} - \mathbb{A}_0^{-1})(\mathbb{K}_h - \mathbb{K}_0)u_0,u_0)_{H^1(B)}} \\
&\hspace{6em} \le C_s \norm{A_h - A_0}_{(L^{d/s}(B))^{d\times d}} \norm{n_h - n_0}_{L^{p'}(B)} \\
&\hspace{6em} \le C_s \biggr( \norm{A_h - A_0}_{(L^{d/s}(B))^{d\times d}}^2 + \norm{n_h - n_0}_{L^{p'}(B)}^2 \biggr),
\end{align*}
and we conclude that \eqref{Tdiff4} may also be absorbed into the remainder term $\mathcal{R}_h$. Finally, we compute the correction term from the remaining terms \eqref{Tdiff1} and \eqref{Tdiff2}. Again noting that $(\lambda_0^{-1}\mathbb{K}_0 - \mathbb{B})u_0 = -\lambda_0^{-1} u_0$, we may write \eqref{Tdiff1} as
\begin{align*}
-((\mathbb{A}_h - \mathbb{A}_0)(\lambda_0^{-1}\mathbb{K}_0 - \mathbb{B})u_0,u_0)_{H^1(B)} &= \lambda_0^{-1} ((\mathbb{A}_h - \mathbb{A}_0)u_0,u_0)_{H^1(B)} \\
&= \lambda_0^{-1} ((A_h - A_0)\nabla u_0,\nabla u_0)_B,
\end{align*}
and we may write \eqref{Tdiff2} in the form
\begin{equation*}
\lambda_0^{-1} ((\mathbb{K}_h - \mathbb{K}_0)u_0,u_0)_{H^1(B)} = -\lambda_0^{-1} k^2((n_h - n_0)u_0,u_0)_B.
\end{equation*}
Thus, we arrive at the correction formula given by \eqref{Th_main}. \qed

\end{proof}

In the following corollary, we use \eqref{Th_main} to obtain an upper bound on perturbations of the eigenvalues.

\begin{corollary} \label{corollary:eig_est}

Assume the hypotheses of Theorem \ref{theorem:main}, and define the exponent $p''$ by
\begin{equation} \label{pdoubleprime}
p'' = \left\{ \begin{array}{cl} 1 &\text{ if } d=2, \\ \frac{3}{2(s+1)} &\text{ if } d=3. \end{array} \right.
\end{equation}
For sufficiently small $h>0$ we have the bound
\begin{equation} \label{eig_est}
\abs{\lambda_h - \lambda_0} \le C_s \biggr( \norm{A_h-A_0}_{(L^{d/2s}(B))^{d\times d}} + \norm{n_h-n_0}_{L^{p''}(B)} \biggr).
\end{equation}

\end{corollary}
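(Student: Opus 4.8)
The plan is to read off the bound \eqref{eig_est} directly from the asymptotic formula \eqref{Th_main} of Theorem \ref{theorem:main} by estimating its three constituents separately: the numerator of the first-order correction term, the fixed nonzero denominator $\inner{S_\delta u_0}{u_0}_{\partial B}$, and the quadratic remainder. The first two are handled by Hölder-type inequalities combined with the regularity $u_0\in H^{s+1}(B)$ furnished by Proposition \ref{prop:A}; the real work is to show that the quadratic remainder, which is naturally controlled in the \emph{stronger} norms $\norm{\cdot}_{(L^{d/s}(B))^{d\times d}}$ and $\norm{\cdot}_{L^{p'}(B)}$, can be dominated by a \emph{linear} expression in the genuinely weaker norms $\norm{\cdot}_{(L^{d/2s}(B))^{d\times d}}$ and $\norm{\cdot}_{L^{p''}(B)}$ once $h$ is small.

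For the numerator I would use that, exactly as in the proof of Theorem \ref{theorem:main}, Proposition \ref{prop:A} gives $u_0\in H^{s+1}(B)$ with $\norm{u_0}_{H^{s+1}(B)}\le C_s\norm{u_0}_{H^1(B)}=C_s$. Hence $\nabla u_0\in(H^s(B))^d$, so by the Sobolev embedding theorem (Theorem \ref{theorem:sobolev}(iii)) $\nabla u_0\in(L^{2d/(d-2s)}(B))^d$, and the three-term Hölder inequality (Theorem \ref{theorem:holder}) with exponents $p=\frac{d}{2s}$, $q=r=\frac{2d}{d-2s}$ (which satisfy $\frac1p+\frac1q+\frac1r=1$) gives
\[
\abs{((A_h-A_0)\nabla u_0,\nabla u_0)_B}\le\norm{A_h-A_0}_{(L^{d/2s}(B))^{d\times d}}\norm{\nabla u_0}_{(L^{2d/(d-2s)}(B))^d}^2\le C_s\norm{A_h-A_0}_{(L^{d/2s}(B))^{d\times d}}.
\]
For the index-of-refraction term I would split by dimension: when $d=2$, $2(s+1)>d$ gives $u_0\in C_b(B)$ by Theorem \ref{theorem:sobolev}(i) and at once $\abs{((n_h-n_0)u_0,u_0)_B}\le\norm{u_0}_{C_b(B)}^2\norm{n_h-n_0}_{L^1(B)}\le C_s\norm{n_h-n_0}_{L^1(B)}$; when $d=3$, $2(s+1)<d$ (since $s<\tfrac12$) gives $u_0\in L^{6/(1-2s)}(B)$ by Theorem \ref{theorem:sobolev}(iii), and the three-term Hölder inequality with $p=\frac{3}{2(s+1)}$, $q=r=\frac{6}{1-2s}$ gives $\abs{((n_h-n_0)u_0,u_0)_B}\le C_s\norm{n_h-n_0}_{L^{3/(2(s+1))}(B)}$. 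In either case the exponent produced is precisely $p''$ of \eqref{pdoubleprime}, and since the proof of Theorem \ref{theorem:main} already shows $\inner{S_\delta u_0}{u_0}_{\partial B}$ is a fixed nonzero number, dividing by it only modifies the constant.

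It remains to absorb the remainder $O\!\left(\norm{A_h-A_0}_{(L^{d/s}(B))^{d\times d}}^2+\norm{n_h-n_0}_{L^{p'}(B)}^2\right)$ into a linear bound in the weaker norms. The device is Hölder's interpolation inequality (Theorem \ref{theorem:holder_interp}) together with the standing convergence hypotheses. For the matrix part, interpolating $L^{d/s}$ between $L^{d/2s}$ and $L^\infty$ with $\alpha=\tfrac12$ gives $\norm{A_h-A_0}_{(L^{d/s}(B))^{d\times d}}\le\norm{A_h-A_0}_{(L^{d/2s}(B))^{d\times d}}^{1/2}\norm{A_h-A_0}_{(L^\infty(B))^{d\times d}}^{1/2}$, and squaring, since $A_h\to A_0$ in $W_\Sigma^{1,\infty}(B)\hookrightarrow(L^\infty(B))^{d\times d}$, the $L^\infty$ factor is $\le1$ for small $h$, whence $\norm{A_h-A_0}_{(L^{d/s}(B))^{d\times d}}^2\le\norm{A_h-A_0}_{(L^{d/2s}(B))^{d\times d}}$. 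For the index of refraction, both $p'',p'\le2$, so interpolating $L^{p'}$ between $L^{p''}$ and $L^2$ gives $\norm{n_h-n_0}_{L^{p'}(B)}\le\norm{n_h-n_0}_{L^{p''}(B)}^\alpha\norm{n_h-n_0}_{L^2(B)}^{1-\alpha}$, where a short computation yields $\alpha=\frac{2s+1}{4s+1}\ge\tfrac12$ when $d=3$, and $\alpha=\frac{1-\epsilon}{1+\epsilon}\ge\tfrac12$ when $d=2$ provided $\epsilon\le\tfrac13$ (which we may arrange, since $\epsilon>0$ in \eqref{pprime} is arbitrary). Writing $\norm{n_h-n_0}_{L^{p'}(B)}^2\le\norm{n_h-n_0}_{L^{p''}(B)}\cdot\norm{n_h-n_0}_{L^{p''}(B)}^{2\alpha-1}\norm{n_h-n_0}_{L^2(B)}^{2-2\alpha}$ and using $n_h\to n_0$ in $L^2(B)$ (with $\norm{n_h-n_0}_{L^{p''}(B)}\le C\norm{n_h-n_0}_{L^2(B)}$ on the bounded domain $B$) to bound the trailing factors, we get $\norm{n_h-n_0}_{L^{p'}(B)}^2\le C_s\norm{n_h-n_0}_{L^{p''}(B)}$ for $h$ small. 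Combining these with the numerator estimate and the fixed denominator yields \eqref{eig_est}. The main obstacle is exactly this last step: the remainder of Theorem \ref{theorem:main} is controlled only in the stronger norms, and downgrading its quadratic dependence to a linear bound in the weaker norms of \eqref{eig_est} is possible only for $h$ small and requires both the interpolation inequality and the qualitative convergences $A_h\to A_0$ in $W_\Sigma^{1,\infty}(B)$ and $n_h\to n_0$ in $L^2(B)$.
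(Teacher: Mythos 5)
Your proposal is correct and follows essentially the same route as the paper: the correction term is bounded via Proposition \ref{prop:A}, the Sobolev embedding, and the three-term H{\"o}lder inequality with exactly the exponents used there, and the quadratic remainder is absorbed by H{\"o}lder interpolation together with the uniform $L^\infty$ (resp.\ $L^2$) boundedness of $A_h-A_0$ (resp.\ $n_h-n_0$). The only cosmetic difference is that for the $n$-term you interpolate against the fixed endpoint $L^2(B)$ and compute $\alpha$ (checking $\alpha\ge\tfrac12$, with $\epsilon\le\tfrac13$ when $d=2$), whereas the paper fixes $\alpha=\tfrac12$ and chooses the intermediate exponent $q=\frac{p'p''}{2p''-p'}$; the two are equivalent.
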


\begin{proof} We first direct our attention to estimating the correction term in \eqref{Th_main}. Since $\nabla u_0\in H^{s+1}(B)$ with the estimate $\norm{\nabla u_0}_{(H^s(B))^d} \le C_s$, implying by the Sobolev embedding theorem that $\nabla u_0\in (L^{2d/(d-2s)}(B))^d$, we may apply the three-term H{\"o}lder's inequality with $p = \frac{d}{2s}$ and $q = r = \frac{2d}{d-2s}$ to obtain
\begin{equation*}
\abs{(A_h-A_0)\nabla u_0,\nabla u_0)_B} \le C_s \norm{A_h - A_0}_{(L^{d/2s}(B))^{d\times d}}.
\end{equation*}
For the second term, the Sobolev embedding theorem implies that $u_0\in C_b(B)$ if $d=2$ and $u_0\in L^{6/(1-2s)}(B)$ if $d=3$. For $d=2$, we use the observation that $u_0$ is uniformly bounded in $B$ to obtain
\begin{equation*}
\abs{((n_h-n_0)u_0,u_0)_B} \le \norm{n_h-n_0}_{L^1(B)} \norm{u_0}_{C_b(B)}^2 \le C_s \norm{n_h-n_0}_{L^1(B)}.
\end{equation*}
For $d=3$, the three-term H{\"o}lder's inequality with $p = \frac{3}{2(s+1)}$ and $q = r = \frac{6}{1-2s}$ yields
\begin{align*}
\abs{((n_h-n_0)u_0,u_0)_B} &\le \norm{n_h - n_0}_{L^{3/2(s+1)}(B)} \norm{u_0}_{L^{6/(1-2s)}(B)}^2 \\
&\le C_s \norm{n_h - n_0}_{L^{3/2(s+1)}(B)}.
\end{align*}
Finally, we observe that the remainder term may be absorbed into the upper bound for the correction term provided that
\begin{subequations} \label{rem}
\begin{align}
\norm{A_h-A_0}_{(L^{d/s}(B))^{d\times d}}^2 &\le C\norm{A_h-A_0}_{(L^{d/2s}(B))^{d\times d}}, \label{rem1} \\
\norm{n_h-n_0}_{L^{p'}(B)}^2 &\le C\norm{n_h-n_0}_{L^{p''}(B)}, \label{rem2}
\end{align}
\end{subequations}
for sufficiently small $h$. We use H{\"o}lder's interpolation inequality (Theorem \ref{theorem:holder_interp}) to show that both \eqref{rem1} and \eqref{rem2} hold. Beginning with \eqref{rem1}, we first note that
\begin{equation*}
\norm{A_h - A_0}_{(L^{d/s}(B))^{d\times d}}^2 = \left( \sum_{i,j=1}^d \norm{(a_h)_{ij} - (a_0)_{ij}}_{L^{d/s}(B)} \right)^2,
\end{equation*}
where $(a_h)_{ij}$ represents the $i,j$ entry of the matrix $A_h$. We apply H{\"o}lder's interpolation inequality with $p = \frac{d}{s}$, $q = \infty$, $r = \frac{d}{2s}$, and $\alpha = \frac{1}{2}$ to each term in the sum to obtain
\begin{align*}
&\norm{A_h - A_0}_{(L^{d/s}(B))^{d\times d}}^2 \\
&\hspace{4em} \le \left( \sum_{i,j=1}^d \norm{(a_h)_{ij} - (a_0)_{ij}}_{L^{d/2s}(B)}^{1/2} \norm{(a_h)_{ij} - (a_0)_{ij}}_{L^\infty(B)}^{1/2} \right)^2,
\end{align*}
and an application of the Cauchy-Schwarz inequality yields
\begin{align*}
&\norm{A_h - A_0}_{(L^{d/s}(B))^{d\times d}}^2 \\
&\hspace{4em} \le \left( \sum_{i,j=1}^d \norm{(a_h)_{ij} - (a_0)_{ij}}_{L^{d/2s}(B)} \right) \left( \sum_{i,j=1}^d \norm{(a_h)_{ij} - (a_0)_{ij}}_{L^\infty(B)} \right) \\
&\hspace{4em} = \norm{A_h - A_0}_{(L^{d/2s}(B))^{d\times d}} \norm{A_h - A_0}_{(L^\infty(B))^{d\times d}}.
\end{align*}
Observing that $\norm{A_h - A_0}_{(L^\infty(B))^{d\times d}}$ is uniformly bounded as $h\to0$ due to convergence of $\{A_h\}$ to $A_0$ in $W_\Sigma^{1,\infty}(B)$, it follows that \eqref{rem1} holds. We now apply the same reasoning to establish \eqref{rem2}, and we begin by considering the exponent $q = \frac{p' p''}{2p'' - p'}$, which may be explicitly written as
\begin{equation*}
q = \left\{ \begin{array}{cl} \frac{1+\epsilon}{1-\epsilon} &\text{ if } d=2, \\ \frac{3}{2} &\text{ if } d=3. \end{array} \right.
\end{equation*}
Noting that $p'' < p' < \frac{p' p''}{2p'' - p'}$ (for either choice of the dimension $d$), we apply H{\"o}lder's interpolation inequality with $p = p''$, $q = \frac{p' p''}{2p'' - p'}$, $r = p'$, and $\alpha = \frac{1}{2}$ to obtain
\begin{equation*}
\norm{n_h - n_0}_{L^{p'}(B)}^2 \le \norm{n_h - n_0}_{L^{p''}(B)} \norm{n_h - n_0}_{L^q(B)}.
\end{equation*}
Convergence of $\{n_h\}$ to $n_0$ in $L^2(B)$ implies uniform boundedness of $n_h - n_0$ in the weaker $L^q(B)$-norm as $h\to0$, from which we conclude that \eqref{rem2} is satisfied. As a final note, we may absorb the denominator of the correction term into the constant since it is independent of $h$, providing the estimate \eqref{eig_est}. \qed

\end{proof}

We have considered perturbations that only require convergence of the coefficients in certain spaces, but of course we may restrict to specific types of perturbations (such as those corresponding to $L^\infty(B)$-bounded diametrically small perturbations studied in \cite{cakoni_moskow,cakoni_moskow_rome}) in order to gain more information about the sensitivity of the eigenvalues in those cases. We will provide such an example in a computational setting in Section \ref{sec:numerics} by examining the effect of introducing a circular void to the medium.

\section{The isotropic case}
\label{sec:isotropic}

While our main interest has concerned the effects of perturbations of an anisotropic medium on the set of $\delta$-Stekloff eigenvalues, it is worth investigating the isotropic case (in which $A = I$) to obtain improved bounds that result from the higher regularity that is available in this case. Thus, we assume for the remainder of this section that $A_h = A_0 = I$. The following result is an analogue of Proposition \ref{prop:A} and follows directly from standard elliptic regularity estimates (cf. \cite{adams}).

\begin{proposition} \label{prop:iso}

If $w\in H^1(B)$ satisfies
\begin{subequations} \label{w}
\begin{align}
\Delta w - k^2 w &= f \text{ in } B, \\
\normal{w} &= \xi \text{ on } \partial B,
\end{align}
\end{subequations}
for given $f\in L^2(B)$ and $\xi\in H^{1/2}(\partial B)$, then $w_h\in H^2(B)$ with the estimate
\begin{equation} \label{w_est}
\norm{w}_{H^2(B)} \le C \left( \norm{f}_{L^2(B)} + \norm{\xi}_{H^{1/2}(\partial B)} \right).
\end{equation}

\end{proposition}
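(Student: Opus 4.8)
The statement to prove is Proposition~\ref{prop:iso}: if $w \in H^1(B)$ solves the Neumann-type problem $\Delta w - k^2 w = f$ in $B$, $\partial w/\partial\nu = \xi$ on $\partial B$, with $f \in L^2(B)$ and $\xi \in H^{1/2}(\partial B)$, then $w \in H^2(B)$ with the estimate $\norm{w}_{H^2(B)} \le C(\norm{f}_{L^2(B)} + \norm{\xi}_{H^{1/2}(\partial B)})$.

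The plan is to reduce this directly to standard elliptic regularity for the Neumann problem, which applies because $\partial B$ is smooth (this smoothness was assumed in Section~\ref{sec:prelim}) and the operator $\Delta - k^2$ is uniformly elliptic with constant coefficients. First, I would observe that $w$ is by hypothesis a weak solution: for all $v \in H^1(B)$, $\int_B \nabla w \cdot \nabla \bar v \, dx + k^2 \int_B w \bar v \, dx = -\int_B f \bar v \, dx + \inner{\xi}{v}_{\partial B}$, so the problem is well-posed in the variational sense and the given $w$ is the unique such solution. Then I would invoke the shift theorem for the Neumann problem on a smooth bounded domain (cf.\ \cite{adams}): a weak solution of $-\Delta w + k^2 w = -f$ with Neumann data $\xi$ belongs to $H^2(B)$ whenever $f \in L^2(B) = H^0(B)$ and $\xi \in H^{1/2}(\partial B)$, with the quantitative bound \eqref{w_est}. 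This is exactly the $s=0$ (i.e.\ $m=2$) instance of the elliptic regularity hierarchy, and the constant $C$ depends only on $B$ and $k$, not on $f$, $\xi$, or $w$.

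One small technical point worth addressing explicitly is that the operator $\Delta - k^2$ is not coercive without the reaction term being the right sign, but here $-\Delta + k^2$ \emph{is} coercive on $H^1(B)$ (the associated bilinear form is $\int_B |\nabla v|^2 + k^2 \int_B |v|^2$, which is exactly the redefined $H^1(B)$ inner product with $A_0 = I$), so no Fredholm alternative or spectral exclusion is needed, and uniqueness of the weak solution is immediate. Consequently the a priori estimate \eqref{w_est} can be derived in the usual two-step fashion: first the energy estimate $\norm{w}_{H^1(B)} \le C(\norm{f}_{L^2(B)} + \norm{\xi}_{H^{1/2}(\partial B)})$ from testing against $w$ itself and using the trace inequality, and then the interior-plus-boundary $H^2$ regularity estimate $\norm{w}_{H^2(B)} \le C(\norm{f}_{L^2(B)} + \norm{\xi}_{H^{1/2}(\partial B)} + \norm{w}_{H^1(B)})$, after which the lower-order term is absorbed using the first estimate.

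I do not anticipate a genuine obstacle here—this proposition is stated in the paper precisely as a known consequence of textbook elliptic theory, serving as the isotropic counterpart of Proposition~\ref{prop:A} (which required the more delicate heterogeneous-coefficient results of \cite{bonito_guermond_luddens}). The only thing requiring a modicum of care is making sure the Neumann boundary condition is handled correctly—some references state the shift theorem for Dirichlet data, and one must cite the Neumann version, which likewise holds for smooth domains and for which the natural data space for an $H^2$ solution is $H^{1/2}(\partial B)$ (matching the trace of the conormal derivative of an $H^2$ function). So the proof reduces to a single citation to \cite{adams} with a sentence identifying the hypotheses: smooth $\partial B$, uniformly elliptic constant-coefficient operator, $f \in L^2(B)$, and $\xi \in H^{1/2}(\partial B)$.
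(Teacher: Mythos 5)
Your proposal is correct and follows essentially the same route as the paper, which gives no separate proof but simply notes that the result ``follows directly from standard elliptic regularity estimates (cf.\ \cite{adams})''. Your additional observations---the weak formulation, coercivity of the form $\int_B|\nabla v|^2 + k^2\int_B|v|^2$ (so no Fredholm alternative is needed), the energy estimate, and the Neumann-version shift theorem with absorption of the lower-order term---are exactly the standard details being invoked implicitly.
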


We continue with the following analogue of Lemma \ref{lemma:Th_norm}, which asserts the norm convergence of $\{T_h(\lambda)\}$ to $T_0(\lambda)$, independently of $\lambda\in U$, with an improved assumption on the convergence of $\{n_h\}$. We remark that this improvement is a result of no longer needing to uniformly bound the norm $\norm{n_h u}_{\tilde{H}^{s-1}(B)}$, which required convergence of $\{n_h\}$ in $L^2(B)$ (more specifically, in $L^{3/(2-s)}(B)$). Otherwise, the proof is identical to the estimate of the second term in the proof of Lemma \ref{lemma:Th_norm}.

\begin{lemma} \label{lemma:Th_norm_iso}

If $n_h\to n_0$ in $L^{3/2}(B)$ as $h\to0$, then the sequence $\{T_h(\lambda)\}$ satisfies the norm estimate
\begin{equation} \label{Th_norm_iso}
\norm{T_h(\lambda) - T_0(\lambda)} \le C \norm{n_h - n_0}_{L^{p_0}(B)},
\end{equation}
where for each $\epsilon>0$ we have
\begin{equation*}
p_0 = \left\{\begin{array}{cl} 1+\epsilon &\text{ if } d=2, \\ \frac{3}{2} &\text{ if } d=3, \end{array} \right.
\end{equation*}
and the constant $C$ is independent of $h$ and $\lambda\in U = \mathbb{C}\setminus\overline{B_R(0)}$ but depends on $\epsilon$ whenever $d=2$. As a result, we have $T_h(\lambda)\to T_0(\lambda)$ in norm as $h\to0$, uniformly for $\lambda\in U$.

\end{lemma}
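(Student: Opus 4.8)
The plan is to mimic the proof of Lemma \ref{lemma:Th_norm}, but to exploit the fact that now $\mathbb{A}_h = \mathbb{A}_0 = \mathbb{I}$ so that the difference $T_h(\lambda) - T_0(\lambda)$ collapses to a single term. Specifically, since $\mathbb{A}_h^{-1} = \mathbb{A}_0^{-1} = I$, the decomposition used in Lemma \ref{lemma:Th_norm},
\begin{equation*}
T_h(\lambda) - T_0(\lambda) = -(\mathbb{A}_h^{-1} - \mathbb{A}_0^{-1})(\lambda^{-1}\mathbb{K}_h - \mathbb{B}) - \lambda^{-1}\mathbb{A}_0^{-1}(\mathbb{K}_h - \mathbb{K}_0),
\end{equation*}
has vanishing first term, leaving only $T_h(\lambda) - T_0(\lambda) = -\lambda^{-1}(\mathbb{K}_h - \mathbb{K}_0)$. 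In particular we never need to invoke Proposition \ref{prop:A} to show that $(\lambda^{-1}\mathbb{K}_h - \mathbb{B})u$ lies in $H^{s+1}(B)$, and so we never need to bound $\norm{n_h u}_{\tilde H^{s-1}(B)}$, which was the only place in the proof of Lemma \ref{lemma:Th_norm} that convergence of $\{n_h\}$ in $L^2(B)$ (i.e. in $L^{3/(2-s)}(B)$) was used.

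Concretely, I would fix $\lambda\in U$, so that $\abs{\lambda}^{-1} \le R^{-1}$, and write
\begin{equation*}
\abs{((T_h(\lambda) - T_0(\lambda))u,u')_{H^1(B)}} = \abs{\lambda}^{-1}\abs{(\mathbb{A}_0^{-1}(\mathbb{K}_h - \mathbb{K}_0)u,u')_{H^1(B)}} = k^2\abs{\lambda}^{-1}\abs{((n_h - n_0)u,u')_B}
\end{equation*}
for all $u,u'\in H^1(B)$, where I have used that $\mathbb{A}_0^{-1} = I$ and the definition of $\mathbb{K}_h$. From here the estimate is word-for-word the estimate of the second term in the proof of Lemma \ref{lemma:Th_norm}: the Sobolev embedding theorem gives $u,u'\in L^q(B)$ with $q\in[2,\infty)$ arbitrary when $d=2$ and $q=6$ when $d=3$, and the three-term H\"older inequality with $p = 1+\epsilon$, $q = r = \frac{2(1+\epsilon)}{\epsilon}$ (for $d=2$) or $p = \frac{3}{2}$, $q = r = 6$ (for $d=3$) yields
\begin{equation*}
k^2\abs{\lambda}^{-1}\abs{((n_h - n_0)u,u')_B} \le C\norm{n_h - n_0}_{L^{p_0}(B)}\norm{u}_{H^1(B)}\norm{u'}_{H^1(B)},
\end{equation*}
with $p_0$ as in the statement and $C$ independent of $h$ and of $\lambda\in U$ (depending on $\epsilon$ only when $d=2$). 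Taking the supremum over $u,u'$ with unit $H^1(B)$-norm gives \eqref{Th_norm_iso}. Finally, convergence of $\{n_h\}$ to $n_0$ in $L^{3/2}(B)$ implies, since $B$ is bounded and $p_0 \le \frac{3}{2}$ in both dimensions (taking $\epsilon \le \frac{1}{2}$ when $d=2$), that $\norm{n_h - n_0}_{L^{p_0}(B)} \to 0$ by H\"older's interpolation inequality, so $T_h(\lambda)\to T_0(\lambda)$ in norm uniformly for $\lambda\in U$.

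There is essentially no obstacle here: the content is the observation that the $\mathbb{A}$-difference term disappears, after which everything reduces to a subcalculation already carried out in Lemma \ref{lemma:Th_norm}. The only point requiring a small amount of care is the very last step, matching the hypothesis $n_h\to n_0$ in $L^{3/2}(B)$ with the appearance of $L^{p_0}(B)$ on the right-hand side when $d=2$ — one must restrict attention to $\epsilon$ small enough (say $\epsilon\le\frac12$) that $p_0 = 1+\epsilon \le \frac{3}{2}$, so that the $L^{3/2}(B)$-convergence controls the $L^{p_0}(B)$-convergence on the bounded domain $B$; this is exactly why the paper's remark preceding the lemma notes that the improvement is "otherwise" identical to the earlier proof.
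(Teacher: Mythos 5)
Your proposal is correct and matches the paper's own argument: the paper likewise notes that with $A_h=A_0=I$ the operator $\mathbb{A}_h$ is the identity, so $T_h(\lambda)-T_0(\lambda)=-\lambda^{-1}(\mathbb{K}_h-\mathbb{K}_0)$ and the bound reduces to the second-term estimate in the proof of Lemma \ref{lemma:Th_norm}, with no need for Proposition \ref{prop:A} or the $\tilde H^{s-1}(B)$ bound on $n_h u$. Your closing remark about choosing $\epsilon$ small enough (so that $L^{3/2}(B)$-convergence controls the $L^{p_0}(B)$-norm on the bounded domain) is a fine way to justify the final uniform convergence claim.
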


With these initial results in hand, we may state the improved version of Theorem \ref{theorem:main} in the isotropic case, noting that for $d=3$ the new exponent $p'$ may be obtained formally by setting $s = \frac{1}{2}$ in \eqref{pprime}.

\begin{theorem} \label{theorem:main_iso}

Suppose that $n_h\to n_0$ in $L^{3/2}(B)$ as $h\to0$, and suppose that there exist no nontrivial solutions of \eqref{dir} for $(A,n) = (I,n_0)$. Let $\lambda_0$ be a nonzero simple $\delta$-Stekloff eigenvalue for $(A,n) = (I,n_0)$ with $H^1(B)$-normalized eigenfunction $u_0$, choose $R>0$ such that $R<\abs{\lambda_0}$, and let $U = \mathbb{C}\setminus\overline{B_R(0)}$. For sufficiently small $h>0$ there exists a simple $\delta$-Stekloff eigenvalue for $(A,n) = (I,n_h)$ that satisfies the formula
\begin{equation} \label{Th_main_iso}
\lambda_h = \lambda_0 + \frac{k^2 ((n_h-n_0)u_0,u_0)_B}{\inner{S_\delta u_0}{u_0}_{\partial B}} + O\left( \norm{n_h-n_0}_{L^{p'}(B)}^2 \right),
\end{equation}
where the exponent $p'$ is given by
\begin{equation} \label{pprime_iso}
p' = \left\{ \begin{array}{cl} 1+\epsilon &\text{ if } d=2, \\ \frac{6}{5} &\text{ if } d=3, \end{array} \right.
\end{equation}
for a given $\epsilon>0$.

\end{theorem}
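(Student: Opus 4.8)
The plan is to mirror the proof of Theorem \ref{theorem:main} almost verbatim, substituting the isotropic regularity result (Proposition \ref{prop:iso}) for the anisotropic one (Proposition \ref{prop:A}) wherever higher regularity of eigenfunctions or of $(\lambda^{-1}\mathbb{K}_h-\mathbb{B})u$ is invoked, and exploiting that $\mathbb{A}_h = \mathbb{A}_0 = I$ so that all terms involving $\mathbb{A}_h^{-1}-\mathbb{A}_0^{-1}$ vanish identically and $A_h - A_0 = 0$. First I would note that Lemmas \ref{lemma:Th_basic} and \ref{lemma:Th_norm_iso} verify the hypotheses of Theorem \ref{theorem:moskow} for the sequence $\{T_h(\lambda)\}$ on $U = \mathbb{C}\setminus\overline{B_R(0)}$ with $R < \abs{\lambda_0}$. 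The computation of $1 + \lambda_0^2(DT_0(\lambda_0)u_0,u_0)_{H^1(B)} = -\lambda_0\inner{S_\delta u_0}{u_0}_{\partial B} \neq 0$ is unchanged, since it only uses that $u_0$ is an $H^1(B)$-normalized eigenfunction and that $\mathbb{A}_0 = I$, so the nondegeneracy condition of Theorem \ref{theorem:moskow} holds and a simple eigenvalue $\lambda_h$ exists for small $h$.

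Next I would evaluate the correction term. With $\mathbb{A}_h = \mathbb{A}_0 = I$, the decomposition \eqref{Tdiff} collapses: \eqref{Tdiff1}, \eqref{Tdiff3}, \eqref{Tdiff4} all vanish, and only \eqref{Tdiff2} survives, giving $((T_0(\lambda_0)-T_h(\lambda_0))u_0,u_0)_{H^1(B)} = \lambda_0^{-1}((\mathbb{K}_h-\mathbb{K}_0)u_0,u_0)_{H^1(B)} = -\lambda_0^{-1}k^2((n_h-n_0)u_0,u_0)_B$. Multiplying by $\lambda_0^2$ and dividing by $1 + \lambda_0^2(DT_0(\lambda_0)u_0,u_0)_{H^1(B)} = -\lambda_0\inner{S_\delta u_0}{u_0}_{\partial B}$ yields exactly the stated correction term $k^2((n_h-n_0)u_0,u_0)_B / \inner{S_\delta u_0}{u_0}_{\partial B}$.

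For the remainder term I would estimate $\mathcal{R}_h = \sup_{\lambda\in U}\norm{(T_h(\lambda)-T_0(\lambda))|_{R(E_{\lambda_0})}}\,\norm{(T_h^*(\lambda)-T_0^*(\lambda))|_{R(E_{\lambda_0}^*)}}$ as in the proof of Theorem \ref{theorem:main}, but now only the $\mathbb{K}_h - \mathbb{K}_0$ contribution is present. For $u\in R(E_{\lambda_0})$, Proposition \ref{prop:iso} gives $u\in H^2(B)$ with $\norm{u}_{H^2(B)}\le C\norm{u}_{H^1(B)}$ (since $u$ solves \eqref{w} with $f = \lambda_0^{-1}k^2(n_0+1)u\in L^2(B)$ and $\xi = S_\delta u\in H^{1/2}(\partial B)$); similarly $u^*\in H^2(B)$. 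For $d=2$, $H^2(B)\hookrightarrow C_b(B)$ exactly as before, so the exponent $1+\epsilon$ is retained. For $d=3$, $H^2(B)\hookrightarrow L^{6/(1-2s)}(B)$ with $s=\tfrac12$ would formally give $L^\infty$, but more carefully $H^2(B)\hookrightarrow L^q(B)$ for every finite $q$ when $d=3$; applying the three-term H\"older inequality with $q = r$ large and $p$ correspondingly close to $\tfrac65$ (in place of $\tfrac3{2+s}$ with $s=\tfrac12$, which equals $\tfrac65$ in the limit) yields $\norm{(T_h(\lambda)-T_0(\lambda))|_{R(E_{\lambda_0})}}\le C\norm{n_h-n_0}_{L^{p'}(B)}$ with $p'$ as in \eqref{pprime_iso}, and the same bound for the adjoint norm; squaring gives $\mathcal{R}_h = O(\norm{n_h-n_0}_{L^{p'}(B)}^2)$. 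Assembling the correction term and the remainder yields \eqref{Th_main_iso}. The only mild subtlety — and the step I would be most careful with — is confirming that in $d=3$ the full strength $H^2(B)\hookrightarrow L^q(B)$ for all $q<\infty$ (and even $q=\infty$ up to a log, though $L^q$ for large finite $q$ suffices) legitimately produces the exponent $\tfrac65$ rather than only something approaching it, and in particular whether one needs an $\epsilon$ there as in $d=2$; the cleanest route is to pick $q$ finite but arbitrarily large and absorb the resulting slack using that $n_h - n_0$ is uniformly bounded in $L^{3/2}(B)$ by the convergence hypothesis, exactly as in the proof of Corollary \ref{corollary:eig_est}.
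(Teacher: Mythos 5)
Your overall strategy is exactly the paper's: verify the hypotheses of Theorem \ref{theorem:moskow} via Lemmas \ref{lemma:Th_basic} and \ref{lemma:Th_norm_iso}, note that with $\mathbb{A}_h=\mathbb{A}_0=I$ the decomposition \eqref{Tdiff} collapses to \eqref{Tdiff2} so the correction term is $k^2((n_h-n_0)u_0,u_0)_B/\inner{S_\delta u_0}{u_0}_{\partial B}$, and estimate the remainder on $R(E_{\lambda_0})$ and $R(E_{\lambda_0}^*)$ using the $H^2(B)$ regularity from Proposition \ref{prop:iso}. The one genuine problem is your handling of the remainder for $d=3$. You doubt that $H^2(B)$ embeds into $L^\infty(B)$ in three dimensions (``up to a log'') and therefore retreat to $H^2(B)\hookrightarrow L^q(B)$ for large finite $q$. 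In fact $2\cdot2=4>3=d$, so Theorem \ref{theorem:sobolev}(i) gives $H^2(B)\hookrightarrow C_b(B)$ with no loss (the critical space in three dimensions is $H^{3/2}(B)$, not $H^2(B)$). The paper uses precisely this: the three-term H\"older inequality with $p=\tfrac{6}{5}$, $q=\infty$ (for $u\in C_b(B)$) and $r=6$ (for $u'\in H^1(B)\hookrightarrow L^6(B)$) gives $\norm{(T_h(\lambda)-T_0(\lambda))|_{R(E_{\lambda_0})}}\le C\norm{n_h-n_0}_{L^{6/5}(B)}$ directly, the same bound holds for the adjoint restricted to $R(E_{\lambda_0}^*)$, and squaring yields the stated remainder.

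Your proposed workaround does not repair this step. With finite $q=r$ large you only obtain a bound $C\norm{n_h-n_0}_{L^p(B)}$ for some $p>\tfrac{6}{5}$, i.e.\ a bound in a strictly stronger norm, and it cannot be converted into the claimed $O\bigl(\norm{n_h-n_0}_{L^{6/5}(B)}^2\bigr)$: H\"older interpolation runs the wrong way, since $\norm{n_h-n_0}_{L^p(B)}\le\norm{n_h-n_0}_{L^{6/5}(B)}^{\alpha}\norm{n_h-n_0}_{L^{3/2}(B)}^{1-\alpha}$ with $\alpha<1$ only produces $O\bigl(\norm{n_h-n_0}_{L^{6/5}(B)}^{2\alpha}\bigr)$. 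The interpolation device of Corollary \ref{corollary:eig_est} serves a different purpose (trading a squared norm for a single first power of a weaker norm) and does not rescue the exact exponent in \eqref{pprime_iso}; following your route you would only prove the formula with an exponent strictly larger than $\tfrac{6}{5}$. Replacing the limiting argument by the embedding $H^2(B)\hookrightarrow C_b(B)$ closes the gap and makes your proof coincide with the paper's; the $d=2$ case and everything else in your outline are fine as written.
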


\begin{proof} From Lemma \ref{lemma:Th_norm_iso} we may apply Theorem \ref{theorem:moskow} as we did in the proof of Theorem \ref{theorem:main} for the anisotropic case. The correction term in the present case is obtained by setting $A_h = A_0 = I$ in \eqref{Th_main}, and as a consequence we need only bound the remainder term. For any $\lambda\in U$ we observe that for all $u\in R(E_{\lambda_0})$ and $u'\in H^1(B)$ we have
\begin{equation*}
\abs{((T_h(\lambda) - T_0(\lambda))u,u')_B} \le k^2 \abs{\lambda}^{-1} \abs{((n_h - n_0)u,u')_B}.
\end{equation*}
By Proposition \ref{prop:iso} we have $u\in H^2(B)$ with $\norm{u}_{H^2(B)} \le C\norm{u}_{H^1(B)}$, and the Sobolev embedding theorem implies that $u\in C_b(B)$ for $d=2,3$. Thus, we may apply the three-term H{\"o}lder's inequality with $p = p'$, $q = \infty$, and $r\in[2,\infty)$ when $d=2$ and $r=6$ when $d=3$ to obtain
\begin{equation*}
\abs{((T_h(\lambda) - T_0(\lambda))u,u')_B} \le C\norm{n_h - n_0}_{L^{p'}(B)} \norm{u}_{H^1(B)} \norm{u'}_{H^1(B)}.
\end{equation*}
The definition of the operator norm yields
\begin{equation*}
\norm{(T_h(\lambda) - T_0(\lambda))|_{R(E_{\lambda_0})}} \le C\norm{n_h - n_0}_{L^{p'}(B)}
\end{equation*}
with $C$ independent of $\lambda\in U$. By the same reasoning as in the proof of Theorem \ref{theorem:main}, the adjoint operators restricted to $R(E_{\lambda_0}^*)$ satisfy the same bound, and the result follows. \qed

\end{proof}

\begin{remark} \label{remark:iso}

In \cite{cakoni_colton_meng_monk}, perturbations of Stekloff eigenvalues (i.e., $\delta=0$) corresponding to an isotropic medium with $n$ real-valued were shown to satisfy the approximate formula (recast in the notation of the present work)
\begin{equation*}
\lambda_h - \lambda_0 \approx \frac{k^2((n_h - n_0)u_0,u_0)_B}{\inner{u_0}{u_0}_{\partial B}},
\end{equation*}
where second-order terms have been ignored. We remark that \eqref{Th_main_iso} provides a rigorous form of this approximation with a remainder term that is also valid for generally complex-valued $n$. The same approach may be used to rigorously justify a similar approximate formula in \cite{cogar_colton_meng_monk} for the related class of modified transmission eigenvalues.

\end{remark}

We end this section by presenting the analogue of Corollary \ref{corollary:eig_est} in the isotropic case, which is an improvement in the case $d=3$ and may be obtained formally by setting $s = \frac{1}{2}$ in \eqref{pdoubleprime}.

\begin{corollary} \label{corollary:eig_est_iso}

Under the hypotheses of Theorem \ref{theorem:main_iso}, for sufficiently small $h>0$ we have the bound
\begin{equation} \label{eig_est}
\abs{\lambda_h - \lambda_0} \le C \norm{n_h-n_0}_{L^1(B)}.
\end{equation}

\end{corollary}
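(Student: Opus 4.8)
The plan is to apply Theorem \ref{theorem:main_iso} directly and then absorb the remainder term using H\"older's interpolation inequality exactly as in the proof of Corollary \ref{corollary:eig_est}, but taking advantage of the fact that in the isotropic case no anisotropic perturbation term is present. First I would invoke \eqref{Th_main_iso} to write
\begin{equation*}
\abs{\lambda_h - \lambda_0} \le \frac{k^2 \abs{((n_h-n_0)u_0,u_0)_B}}{\abs{\inner{S_\delta u_0}{u_0}_{\partial B}}} + O\left( \norm{n_h-n_0}_{L^{p'}(B)}^2 \right),
\end{equation*}
where $p'$ is given by \eqref{pprime_iso}. Since $\lambda_0$ is a simple eigenvalue with $u_0 \ne 0$, the denominator $\inner{S_\delta u_0}{u_0}_{\partial B}$ is nonzero (this was established in the proof of Theorem \ref{theorem:main}) and independent of $h$, so it may be absorbed into the constant.

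Next I would estimate the correction term. By Proposition \ref{prop:iso}, applied to the equation satisfied by $u_0$ (as in the proof of Theorem \ref{theorem:main_iso}), we have $u_0 \in H^2(B)$ with $\norm{u_0}_{H^2(B)} \le C$, and the Sobolev embedding theorem (Theorem \ref{theorem:sobolev}(i), valid since $2\cdot 2 > d$ for $d = 2,3$) gives $u_0 \in C_b(B)$ with $\norm{u_0}_{C_b(B)} \le C$. Applying the three-term H\"older's inequality with $p = 1$ and $q = r = \infty$ then yields
\begin{equation*}
\abs{((n_h-n_0)u_0,u_0)_B} \le \norm{n_h-n_0}_{L^1(B)} \norm{u_0}_{C_b(B)}^2 \le C \norm{n_h-n_0}_{L^1(B)}.
\end{equation*}

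It then remains to show that the remainder term $O(\norm{n_h-n_0}_{L^{p'}(B)}^2)$ can be absorbed into $C\norm{n_h-n_0}_{L^1(B)}$ for sufficiently small $h$; that is, I must prove
\begin{equation*}
\norm{n_h-n_0}_{L^{p'}(B)}^2 \le C \norm{n_h-n_0}_{L^1(B)}
\end{equation*}
for small $h$. For $d=2$ we have $p' = 1+\epsilon$, and for $d=3$ we have $p' = \frac{6}{5}$; in either case $1 < p' < q$ for a suitable exponent $q$ with $\frac{1}{p'} = \frac{1/2}{1} + \frac{1/2}{q}$, so H\"older's interpolation inequality (Theorem \ref{theorem:holder_interp}) with $\alpha = \frac{1}{2}$ gives $\norm{n_h-n_0}_{L^{p'}(B)}^2 \le \norm{n_h-n_0}_{L^1(B)} \norm{n_h-n_0}_{L^q(B)}$; one checks that the required $q$ equals $\frac{1+\epsilon}{1-\epsilon}$ when $d=2$ and $\frac{3}{2}$ when $d=3$, both of which are dominated by $L^{3/2}(B)$ (for $\epsilon$ small enough in the planar case). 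Since $n_h \to n_0$ in $L^{3/2}(B)$ by hypothesis, $\norm{n_h-n_0}_{L^q(B)}$ is uniformly bounded as $h\to 0$, which completes the absorption and yields \eqref{eig_est}. The only mild subtlety, and the step I would be most careful about, is verifying that the interpolation exponent $q$ is indeed no larger than $3/2$ so that convergence in $L^{3/2}(B)$ actually controls it — but this is exactly the computation already carried out in the proof of Corollary \ref{corollary:eig_est}, so no genuine obstacle arises.
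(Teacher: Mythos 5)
Your proposal is correct and follows essentially the same route as the paper: the correction term is bounded by $C\norm{n_h-n_0}_{L^1(B)}$ using the $H^2(B)$ regularity of $u_0$ (Proposition \ref{prop:iso}), the Sobolev embedding into $C_b(B)$, and the three-term H\"older inequality with $p=1$, $q=r=\infty$, while the remainder is absorbed via H\"older interpolation with $\alpha=\tfrac12$ and $q=\tfrac{p'}{2-p'}$, controlled by the assumed $L^{3/2}(B)$ convergence. Your remark about choosing $\epsilon$ small in the planar case matches the paper's observation that $L^q(B)$ is a (possibly) weaker norm than $L^{3/2}(B)$, so no gap remains.
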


\begin{proof} We observe that
\begin{equation*}
\abs{((T_h(\lambda_0) - T_0(\lambda_0))u_0,u')_B} \le k^2 \abs{\lambda_0}^{-1} \abs{((n_h - n_0)u_0,u_0)_B}.
\end{equation*}
As in the proof of Theorem \ref{theorem:main_iso}, from $u_0\in H^2(B)$ the Sobolev embedding theorem implies that $u_0\in C_b(B)$ for $d=2,3$ with continuous embedding. Noting that $u_0$ now appears in both components of the inner product, we apply the three-term H{\"o}lder's inequality with $p = 1$ and $q=r=\infty$ to obtain
\begin{equation*}
\abs{((T_h(\lambda_0) - T_0(\lambda_0))u_0,u')_B} \le C\norm{n_h - n_0}_{L^1(B)}.
\end{equation*}
All that remains is to verify that the remainder term may be absorbed into the right-hand side of this estimate, as we did in the proof of Corollary \ref{corollary:eig_est}. We begin with the observation that the exponent $q = \frac{p'}{2-p'}$ may be written explicitly as
\begin{equation*}
q = \left\{ \begin{array}{cl} \frac{1+\epsilon}{1-\epsilon} &\text{ if } d=2, \\ \frac{3}{2} &\text{ if } d=3. \end{array} \right.
\end{equation*}
Noting that $1<p'<q$, we apply H{\"o}lder's interpolation inequality with $p = 1$, $q = \frac{p'}{2-p'}$, $r=p'$, and $\alpha = \frac{1}{2}$ to obtain
\begin{equation*}
\norm{n_h - n_0}_{L^{p'}(B)}^2 \le \norm{n_h - n_0}_{L^1(B)} \norm{n_h - n_0}_{L^q(B)}.
\end{equation*}
Convergence of $\{n_h\}$ to $n_0$ in $L^{3/2}(B)$ and hence also in the (possibly) weaker $L^q(B)$-norm implies that $\norm{n_h-n_0}_{L^q(B)}$ is uniformly bounded as $h\to0$. Thus, we conclude that \eqref{eig_est} holds. \qed

\end{proof}

\section{Numerical examples}
\label{sec:numerics}

We now investigate the sensitivity of $\delta$-Stekloff eigenvalues corresponding to an isotropic medium through an example in two dimensions. We refer to \cite{cogar2020} for a description of how $\delta$-Stekloff eigenvalues and the corresponding eigenfunctions may be computed using the finite element method. We choose a medium such that $n_0=4$ in an L-shaped domain $D$ (depicted in Figure \ref{fig:ef}) given by removing the square $[0.1,1.1]\times[-1.1,-0.1]$ from the square $[-0.9,1.1]\times[-1.1,0.9]$ and $n_0=1$ elsewhere, and we choose the auxiliary domain $B$ to be the disk of radius $1.5$ centered at the origin, which guarantees that the closure of $D$ is contained within $B$. Finally, we choose $k=1$ and $\delta = \frac{1}{2}$. \par

For the perturbed medium, we introduce a circular void of radius $h>0$ centered at $(x_c,y_c) = (0.1,0.4)$ that we denote by $B_h := B_h(x_c,y_c)$, in which case we may write $n_h$ as
\begin{equation*}
n_h = \left\{ \begin{array}{cl} 1 &\text{ in } B_h, \\ n_0 &\text{ elsewhere}. \end{array} \right.
\end{equation*}
We note that $\text{supp}(n_h - n_0) = B_h$ and $\norm{n_h - n_0}_{L^\infty(B)} = \abs{n_0 - 1} = 3$, from which we obtain
\begin{equation*}
\norm{n_h - n_0}_{L^{3/2}(B)} \le \norm{n_h - n_0}_{L^\infty(B)} \left( \int_{B_h} \,dx \right)^{2/3} = 3\pi h^{4/3}.
\end{equation*}
In particular, we observe that $n_h\to n_0$ in $L^{3/2}(B)$, and Theorem \ref{theorem:main_iso} asserts that for sufficiently small $h$ we have the eigenvalue bound
\begin{equation} \label{eigbound}
\abs{\lambda_h - \lambda_0} \le C \norm{n_h - n_0}_{L^1(B)} \le C h^2.
\end{equation}
We recall from \eqref{Th_main_iso} that $\lambda_h - \lambda_0$ is also related to the magnitude of the eigenfunction $u_0$ in a neighborhood of the support of $n_h - n_0$, motivating us to investigate the eigenvalue bound for two eigenvalues whose respective eigenfunctions have different magnitudes near $B_h$. The normalized eigenfunctions for two such eigenvalues are shown in Figure \ref{fig:ef}, where we have depicted the boundary of the region $B_h$ for $h=0.1$ as a white circle. The corresponding shifts in the two eigenvalues when the radius $h$ is reduced logarithmically from $h=0.1$ to $h=0.01$ are shown in Figure \ref{fig:eigs}. 

\begin{figure}[htp]
% lambda_0=-4.25
\begin{subfigure}{.5\textwidth}
\centering
\includegraphics[width=1\linewidth]{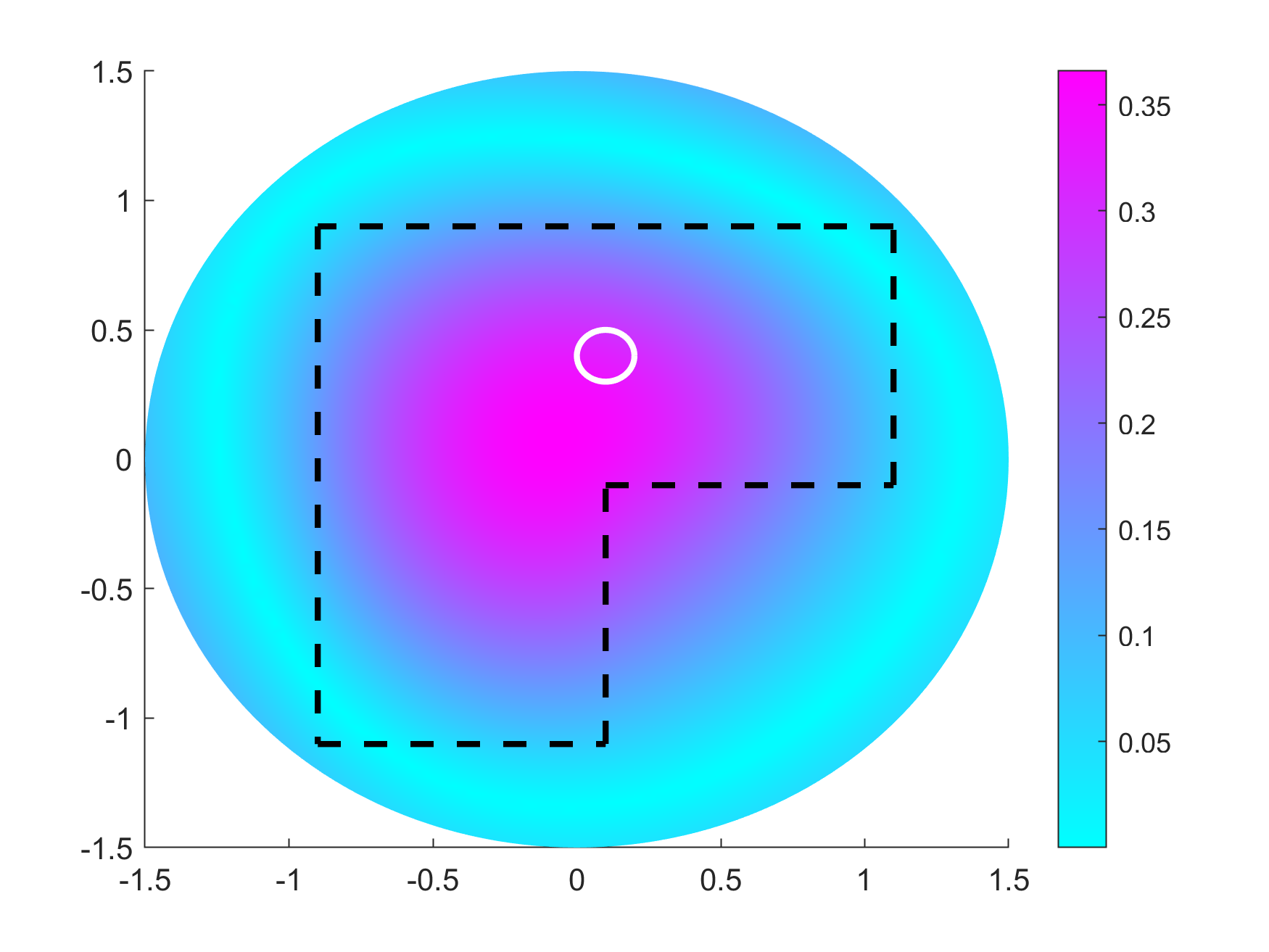}
\caption{$\lambda_0 = -4.25$}
\label{fig:ef1_Lshape_deltahalf}
\end{subfigure}
% lambda_0=-2.16
\begin{subfigure}{.5\textwidth}
\centering
\includegraphics[width=1\linewidth]{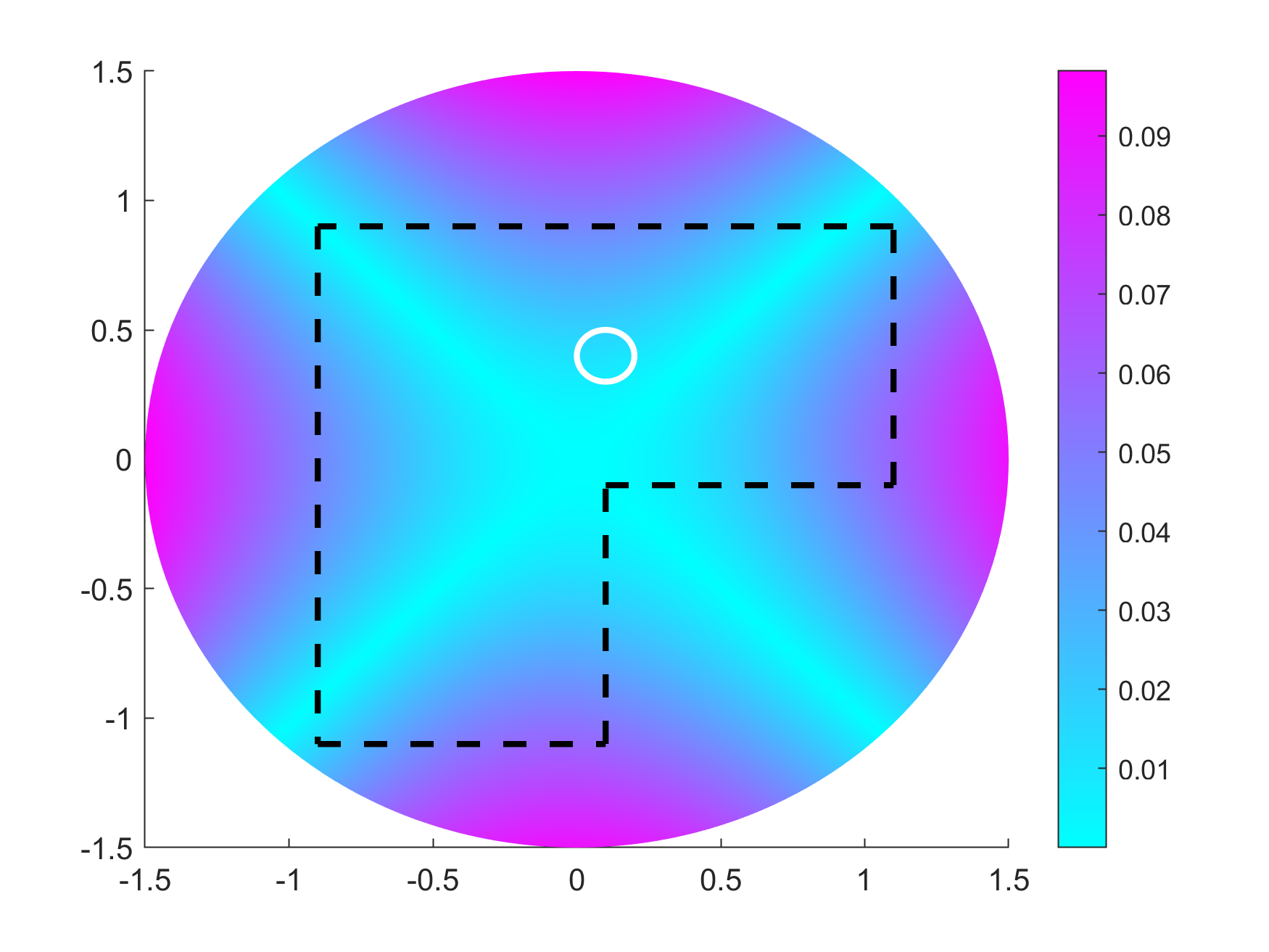}
\caption{$\lambda_0 = -2.16$}
\label{fig:ef2_Lshape_deltahalf}
\end{subfigure}
\caption{Plots of the magnitudes of two eigenfunctions corresponding to $\delta$-Stekloff eigenvalues for an L-shaped domain. The eigenfunction corresponding to $\lambda=-4.25$ is much larger than that corresponding to $\lambda=-2.16$ in a neighborhood of $B_h$, which is outlined in white for $h = 0.1$.}
\label{fig:ef}
\end{figure}

\begin{figure}[htp]
\centering
\includegraphics[width=\linewidth]{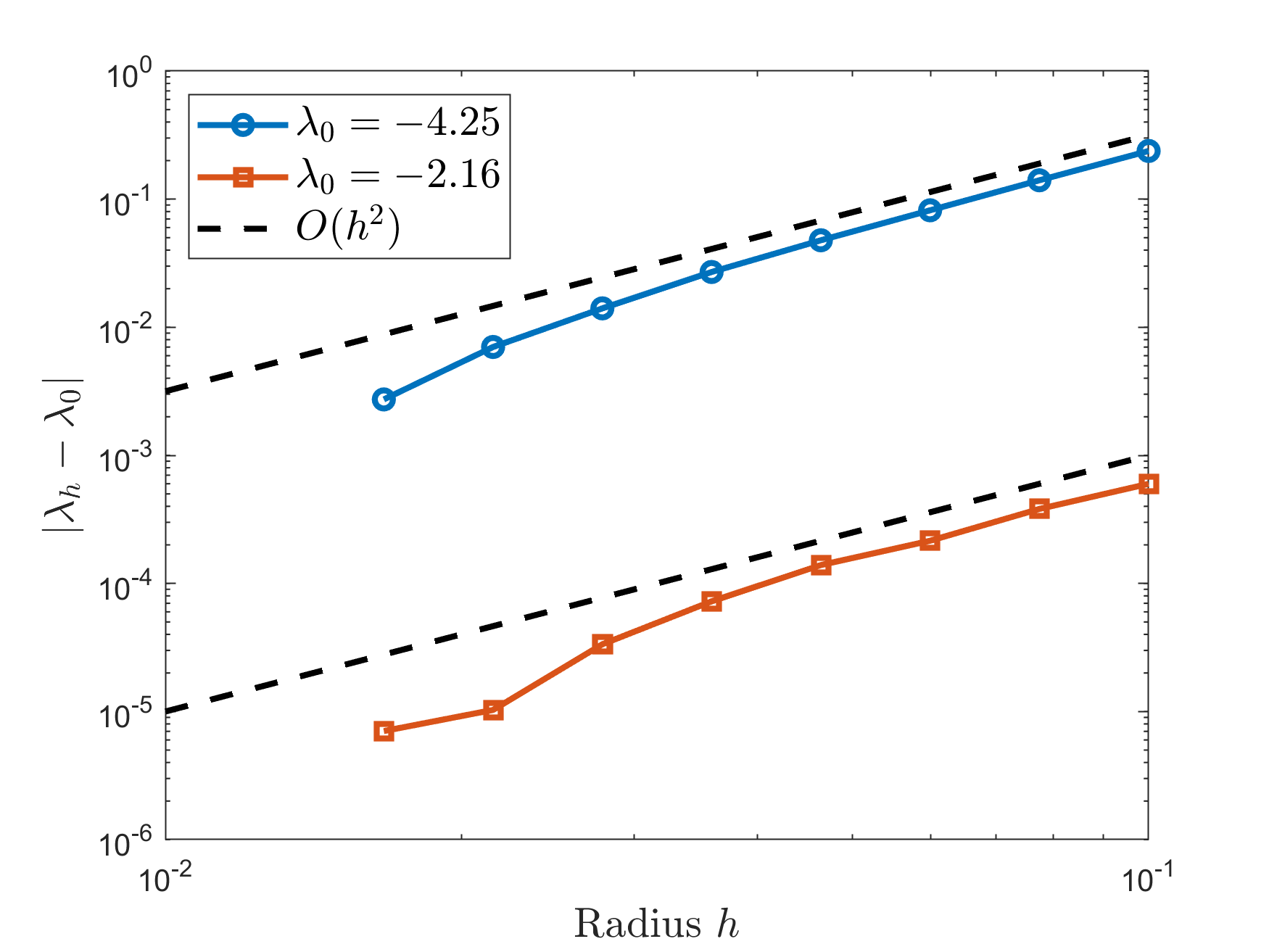}
\caption{Magnitudes of the shifts of the eigenvalues $\lambda_0 = -4.25$ (blue circles) and $\lambda_0 = -2.16$ (red squares). The black dashed lines represent $O(h^2)$-convergence.}
\label{fig:eigs}
\end{figure}

As expected from the observation that the eigenfunction for $\lambda_0 = -4.25$ has significantly higher magnitude near $B_h$ than the eigenfunction for $\lambda_0 = -2.16$, we see in Figure \ref{fig:eigs} that the former eigenvalue shifts to a greater degree due to changes in $n$. Moreover, by comparing with the black dashed lines indicating $O(h^2)$-convergence, we confirm the bound \eqref{eigbound}; however, for smaller values of $h$ it appears that convergence might be even faster than $O(h^2)$, possibly influenced by the shape of the eigenfunctions in that region. This observation suggests that it might be of interest to study the structure of the eigenfunctions in greater detail, such as the location of nodal lines, the number of connected components, and where the eigenfunctions are concentrated. Notably, an eigenvalue whose eigenfunction is concentrated in the center of the domain $B$ (as seen in Figure \ref{fig:ef1_Lshape_deltahalf}) should display high sensitivity for most localized perturbations of $n$, and it would be advantageous to be able to identify such eigenvalues in a practical setting. 

%\FloatBarrier
\section{Conclusion}
\label{sec:conclusion}

By reformulating the $\delta$-Stekloff eigenvalue problem as a nonlinear eigenvalue problem, we have derived precise first-order asymptotic correction formulas for perturbations of the eigenvalues due to changes in the coefficients $A$ and $n$ of the medium. In particular, we found that the regularity of $A$ has a strong effect on the rate of convergence of the eigenvalues as $A_h\to A_0$ and $n_h\to n_0$ in appropriate norms, with the general effect of higher regularity increasing this rate. We also investigated--both theoretically and with a simple numerical example--the sensitivity of the eigenvalues in the isotropic case, which allowed us to leverage classical elliptic regularity estimates to obtain improved bounds. While the numerical example was restricted to perturbations of a piecewise-homogeneous isotropic medium, it serves as an initial step toward the study of more complicated examples involving a possibly heterogeneous absorbing medium. \par
A possible application of the asymptotic formulas we have obtained is to go beyond simply detecting the presence of a flaw in a material and attempt to localize it, but the author is not aware of any results in this direction, even among a restricted class of perturbations. If we ignore the remainder term in the asymptotic formula for an isotropic medium, we are presented with the inverse problem of determining the support of the perturbation $n_h - n_0$ from a knowledge of $\lambda_h - \lambda_0$ for multiple eigenvalues and possibly their eigenfunctions. A constructive solution to this problem would be of immense value to the applicability of eigenvalues as target signatures in nondestructive testing. \par
Other interesting questions remain unanswered, one of which concerns the effect of the smoothing parameter $\delta$ on the sensitivity of the eigenvalues. This relationship is not clear from the asymptotic formulas that we have derived, as the eigenfunction $u_0$ also depends on $\delta$. A further modification of the $\delta$-Stekloff eigenvalue problem was investigated in \cite{cogar2020}, in which the lowest order Fourier coefficient in the explicit representation of $S_\delta$ was increased by an amplification factor $\sigma$, and a series of numerical examples showed an improvement in sensitivity of one of the eigenvalues. It would be of interest to examine this observation in the theoretical context that we have developed in the preceding sections. \par
Finally, the class of $\delta$-Stekloff eigenvalues was extended to the case of electromagnetic scattering in \cite{cogar2020EM}. Similar to \cite{cogar2020}, stability results were obtained for the eigenvalues but no quantitative formulas were derived. Given the similar structure in this case to the present problem, it may be possible to apply the same techniques to arrive at asymptotic formulas for the eigenvalues; however, the more complicated solvability requirements for electromagnetic problems (namely the strong dependence of compactness results on the coefficients) leaves it unclear how to reformulate the eigenvalue problem as a nonlinear eigenvalue problem that satisfies the hypotheses of Theorem \ref{theorem:moskow}. \\

%\textbf{Conflict of Interest Statement:} On behalf of all authors, the corresponding author states that there is no conflict of interest.

%% For one-column wide figures use
%\begin{figure}
%% Use the relevant command to insert your figure file.
%% For example, with the graphicx package use
%  \includegraphics{example.eps}
%% figure caption is below the figure
%\caption{Please write your figure caption here}
%\label{fig:1}       % Give a unique label
%\end{figure}
%%
%% For two-column wide figures use
%\begin{figure*}
%% Use the relevant command to insert your figure file.
%% For example, with the graphicx package use
%  \includegraphics[width=0.75\textwidth]{example.eps}
%% figure caption is below the figure
%\caption{Please write your figure caption here}
%\label{fig:2}       % Give a unique label
%\end{figure*}
%%
%% For tables use
%\begin{table}
%% table caption is above the table
%\caption{Please write your table caption here}
%\label{tab:1}       % Give a unique label
%% For LaTeX tables use
%\begin{tabular}{lll}
%\hline\noalign{\smallskip}
%first & second & third  \\
%\noalign{\smallskip}\hline\noalign{\smallskip}
%number & number & number \\
%number & number & number \\
%\noalign{\smallskip}\hline
%\end{tabular}
%\end{table}

%\begin{acknowledgements}
%If you'd like to thank anyone, place your comments here
%and remove the percent signs.
%\end{acknowledgements}

% Authors must disclose all relationships or interests that 
% could have direct or potential influence or impart bias on 
% the work: 
%
% \section*{Conflict of interest}
%
% The authors declare that they have no conflict of interest.

\bibliographystyle{spmpsci}      % mathematics and physical sciences
%\bibliography{myreferences}   % name your BibTeX data base

\end{document}